\newcommand{\mcN}{\mathcal{N}}
\newcommand{\mcP}{\mathcal{P}}
\newcommand{\mcZ}{\mathcal{Z}}
\newcommand{\NN}{\mathbb{N}}
\renewcommand{\sb}{\mathfrak{sb}}
\renewcommand{\ge}{\geqslant}
\renewcommand{\le}{\leqslant}
\theoremstyle{theorem}
\newtheorem{thm}{Theorem}
\newtheorem{lem}[thm]{Lemma}
\newtheorem{prop}[thm]{Proposition}
\newtheorem{cor}[thm]{Corollary}
\newtheorem{qn}[thm]{Question}
\theoremstyle{definition}
\newtheorem{defn}[thm]{Definition}
\newtheorem{ex}{Example}
\theoremstyle{remark}
\newtheorem{rem}[thm]{Remark}
\title{Global Fibonacci Nim}
\author{Urban Larsson}
\address{Department of Mathematics and Statistics, Dalhousie University, 6316 Coburg Road, PO Box 15000, Halifax, Nova Scotia, Canada, B3H 4R2}
\email{urban031@gmail.com}
\author{Simon Rubinstein-Salzedo}
\address{Euler Circle, Palo Alto, CA 94306}
\email{simon@eulercircle.com}
\date{\today}
\begin{document}
\maketitle

\begin{abstract}
\textsc{Fibonacci nim} is a popular impartial combinatorial game, usually played with a single pile of stones. The game is appealing due to its surprising connections with the Fibonacci numbers and the Zeckendorf representation. In this article, we investigate some properties of a variant played with multiple piles of stones, and solve the 2-pile case. A player chooses one of the piles and plays as in Fibonacci nim, but here the move-size restriction is a global parameter, valid for any pile. 
\end{abstract}

\section{Introduction}

%\simon{Write a proper introduction, with all relevant references to the literature}

The classical game of \textsc{Fibonacci nim}, as studied by Whinihan in~\cite{Whinihan63}, is played as follows: There is one pile of stones, with $n$ stones in the pile initially, and there are two players who take turns making moves. A move consists of removing some of the stones in the pile, subject to the following constraints: the first player must remove at least 1 stone, but may not remove the entire pile. On subsequent turns, if the previous player removed $m$ stones, then the next player must remove least one stone and at most $2m$ stones. The loser is the player who is unable to make a move (usually because there are no stones remaining, although there is also a special case in which the initial pile has one stone).

In his original paper on the game, Whinihan described the outcome of the game under optimal play:

\begin{thm}[Whinihan,~\cite{Whinihan63}] The first player has a winning strategy if and only if $n$ is not a Fibonacci number. \end{thm}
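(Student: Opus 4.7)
The plan is to prove a more refined claim by induction on $n$, and then to derive Whinihan's theorem as a quick corollary. I augment the game state to a pair $(n,\ell)$, where $n$ is the current pile size and $\ell$ is the maximum number of stones that the player to move may remove. Whinihan's initial position corresponds to $(n,n-1)$, since the first player may take anything from $1$ to $n-1$. For $n\ge 1$, let $s(n)$ denote the smallest Fibonacci summand in the Zeckendorf representation of $n$. The central claim is:
\[
(n,\ell) \text{ is a P-position} \iff \ell < s(n).
\]
Granting this, the theorem follows at once. If $n=F_k$, then $s(n)=F_k=n>n-1$, so $(n,n-1)$ is a P-position and the first player loses. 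If $n$ is not a Fibonacci number, its Zeckendorf expansion has at least two summands and therefore $s(n)\le n-1$; the first player wins by removing $s(n)$ stones.

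For the ``$\Leftarrow$'' direction of the claim, suppose $\ell\ge s(n)$ and set $F_j=s(n)$. The player to move takes exactly $F_j$ stones. If $n=F_j$, the opponent faces the empty pile and loses immediately. Otherwise the Zeckendorf expansion of $n-F_j$ is obtained from that of $n$ by simply deleting the smallest summand, and its new smallest summand has index at least $j+2$, since Zeckendorf summands are non-consecutive. Because $F_{j+2}=F_{j+1}+F_j>2F_j$, we have $2F_j<s(n-F_j)$, and by induction $(n-F_j,2F_j)$ is a P-position for the opponent.

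For the ``$\Rightarrow$'' direction, suppose $\ell<s(n)$. Any legal move removes $m'$ stones with $1\le m'\le\ell<s(n)$, and I must show that the resulting state $(n-m',2m')$ is an N-position, i.e.\ that $s(n-m')\le 2m'$. This is the \emph{key lemma} and, I expect, the main obstacle. My plan for it is to let $F_j=s(n)$, write $m'$ in Zeckendorf form $m'=F_{a_1}+\cdots+F_{a_t}$ with $a_1\le j-1$, and perform the subtraction $n-m'$ by repeatedly applying the identity $F_j=F_{j-1}+F_{j-2}$ to the smallest summand of $n$ until a cancellation against $F_{a_1}$ becomes possible. The resulting reduction should force the Zeckendorf representation of $n-m'$ to contain a summand of index at most $a_1+1$, yielding $s(n-m')\le F_{a_1+1}\le 2F_{a_1}\le 2m'$.

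The anticipated difficulty is purely bookkeeping: handling the Fibonacci ``carries'' that arise when $a_1$ is close to $j$, and when several summands $F_{a_i}$ of $m'$ must be cancelled in turn. I would organize this as a secondary induction on the number $t$ of Zeckendorf summands of $m'$, peeling off one $F_{a_i}$ at a time and verifying that the invariant ``the smallest summand of the current remainder sits at index $\le a_i+1$'' is preserved.
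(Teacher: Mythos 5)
Your strategy is the standard Zeckendorf argument, and it is essentially the route the paper itself points to: the theorem is only cited from Whinihan, but \S\ref{sec:Ppos} records exactly your central claim (namely $(n;r)\in\mcP$ iff $z_1(n)>r$), and the arithmetic fact you isolate as the ``key lemma'' is precisely Lemma~\ref{lem:smallfibs}, which the paper imports from \cite{LRS14} without reproving. Your induction scaffolding is sound: the derivation of the theorem from the claim is correct, and the $\Leftarrow$ direction goes through because Zeckendorf indices are at least $2$, so $F_{j+2}=F_{j+1}+F_j>2F_j$. The one step you have not actually carried out is the key lemma, $z_1(n-m')\le 2m'$ for $1\le m'<z_1(n)$; your plan of peeling off the Zeckendorf summands of $m'$ one at a time will work, but it invites exactly the carry bookkeeping you are worried about. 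A cleaner way to close it: with $F_j=z_1(n)$, write $n-m'=(n-F_j)+(F_j-m')$. Every Zeckendorf part of $n-F_j$ has index at least $j+2$, while $0<F_j-m'<F_j$ forces every part of $F_j-m'$ to have index at most $j-1$; since indices $j-1$ and $j+2$ are not consecutive, the two representations concatenate into the Zeckendorf representation of $n-m'$, so $z_1(n-m')=z_1(F_j-m')$. This reduces the lemma to the single-summand statement $z_1(F_j-m')\le 2m'$ for $1\le m'<F_j$, which is a short induction on $j$: if $m'\ge F_{j-2}$ then $z_1(F_j-m')\le F_j-m'\le F_{j-1}\le 2F_{j-2}\le 2m'$, and otherwise $F_j-m'=F_{j-1}+(F_{j-2}-m')$ with the $F_{j-1}$ term harmlessly large, so one recurses on $F_{j-2}-m'$. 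With that inserted, your proof is complete and agrees with the paper's (and Whinihan's) approach; the paper's Theorem~\ref{thm:alg} then gives an alternative, Fibonacci-word phrasing of the same classification.
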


Furthermore, Whinihan gave a full winning strategy. This strategy relies on a celebrated theorem of Zeckendorf. However, it is also possible to give an alternative description of the winning strategy, in terms of partial sums of the so-called Fibonacci word. We introduce this word in~\S\ref{sec:Fibword} and deduce the winning strategy in terms of the Fibonacci word in~\S\ref{sec:Ppos}

It is natural to consider the game of \textsc{Fibonacci nim} played with more than one pile. In this game, one may remove stones from only one pile on any given move. However, there are two natural possibilities for the bound on the number of stones that may be removed: \begin{itemize} \item \textit{Local move dynamic:} Each pile has a separate counter, so that if the last move (by either player) in a pile was to remove $m$ stones, then the next move \emph{in that pile} must be to remove at most $2m$ stones. \item \textit{Global move dynamic:} There is only one counter for the entire game, so that if the previous move was to remove $m$ stones in \emph{any} pile, then the next move must be to remove at most $2m$ stones in \emph{any} pile (either the same pile, or a different pile). \end{itemize} In either case, it is natural to remove the restriction that the first player may not remove an entire pile; this artificial rule is necessary to make the one-pile game nontrivial, but it serves no further purpose in either multi-pile game.

The local move dynamic game is more natural from the perspective of combinatorial game theory, as the game is the \emph{disjunctive sum} of the individual piles. As a result, the game can be studied by means of the Sprague-Grundy theory (see~\cite{Grundy39,Sprague35}); the authors have previously analyzed this version in~\cite{LRS14}.

The global move dynamic game is probably the more natural one from the perspective of game play, and it must be analyzed differently, as the powerful tools based on Grundy values and disjunctive sums are not applicable. In~\S\ref{sec:twopile} we give the outcome class for all two-pile positions, first in terms of Zeckendorf representation, and then in terms of a generalized version of the Fibonacci word. In~\S\ref{sec:multipile} we study some properties of positions with several piles. Finally, in~\S\ref{sec:pow2nim}, we describe a simpler variant of the global move dynamic game, in which we can describe the full winning strategy.

We use the notation $(n_1,\ldots,n_k;r)$ to denote the global \textsc{Fibonacci nim} position with piles of size $n_1,\ldots,n_k$, where the maximum number of stones that can be removed on the first turn is $r$. We write $(n_1,\ldots,n_k;\infty)$ for the global \textsc{Fibonacci nim} position with piles of size $n_1,\ldots,n_k$, where any number of stones can be removed on the first turn, provided that they are all from the same pile.

\section*{Acknowledgments}

Part of the work for this paper was completed at the Games at Dal workshop at Dalhousie University in Halifax, Nova Scotia, in August 2015.

\section{$\mcN$ and $\mcP$ positions} \label{sec:NandP}

\begin{defn} We say that a game $G$ is an $\mcN$ position (resp.\ $\mcP$ position) and write $G\in\mcN$ (resp.\ $G\in\mcP$) if the player to move (resp.\ player not to move) has a winning strategy under optimal play. \end{defn}

Given an \emph{impartial} game (i.e.\ one in which both players have the same moves available to them, as opposed to e.g.\ \textsc{chess}, where one player moves the white pieces and one player moves the black pieces), there is a simple recursive characterization of the $\mcN$ and $\mcP$ positions.

\begin{prop} \label{prop:partition} $G\in\mcN$ if and only if there exists a move to a game $G'$ such that $G'\in\mcP$. \end{prop}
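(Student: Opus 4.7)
The plan is to induct on the well-founded rank of positions in the game tree; because every variant of \textsc{Fibonacci nim} considered here terminates in finitely many moves (the total number of stones strictly decreases), this induction is valid. I would first dispose of the base case: if $G$ has no legal moves, then the player to move loses immediately, so $G \in \mathcal{P}$ by definition, and trivially there is no move to any $G'$ whatsoever, so both sides of the biconditional fail and are equivalent.

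For the inductive step, I would assume the statement holds for every position reachable in one move from $G$. For the forward direction, suppose $G \in \mathcal{N}$, so the player to move has a winning strategy. This strategy designates some first move to a position $G'$, and the fact that the strategy succeeds means that after making this move, the opponent (who is now to move in $G'$) has no winning strategy against the remainder of our strategy; that is, $G' \notin \mathcal{N}$. Using that $\mathcal{N}$ and $\mathcal{P}$ partition the positions, we conclude $G' \in \mathcal{P}$. For the converse, suppose there is a move from $G$ to some $G' \in \mathcal{P}$. Then the player to move in $G$ plays this move; in $G'$, the opponent is now the player to move, but since $G' \in \mathcal{P}$ the opponent loses under optimal play. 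Hence the player to move in $G$ has a winning strategy, so $G \in \mathcal{N}$.

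The main subtlety — and the one step I would spell out carefully — is the fact that $\mathcal{N}$ and $\mathcal{P}$ partition the set of positions, i.e.\ every position lies in exactly one class. This is not logically automatic from the stated definitions; it needs the observation that in a finite impartial game exactly one of the two players possesses a winning strategy. I would establish this by the same induction, carrying it along simultaneously with the main biconditional: a position lies in $\mathcal{P}$ exactly when every move leads to an $\mathcal{N}$ position (vacuously true for terminal positions), and in $\mathcal{N}$ exactly when some move leads to a $\mathcal{P}$ position. With this partition bookkeeping in place, the two directions above combine immediately to yield the proposition, with no further computation required.
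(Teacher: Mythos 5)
Your proof is correct and is the standard argument: the paper itself does not prove this proposition but simply cites \cite[Theorem 2.13]{ANW07}, where essentially the same induction on the (well-founded) game tree appears. You rightly flag that the partition of positions into $\mcN$ and $\mcP$ (i.e.\ determinacy of finite impartial games) is the one point needing care, and carrying it along in the same induction is exactly the right way to handle it.
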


See~\cite[Theorem 2.13]{ANW07}.

Consequently, $G\in\mcP$ if and only if, for every move to a game $G'$, we have $G'\in\mcN$.

\section{Zeckendorf representation} \label{sec:zeckendorf}

A celebrated theorem of Lekkerkerker and Zeckendorf is the following:

\begin{thm}[Lekkerkerker~\cite{Lek52}, Zeckendorf~\cite{Zeckendorf72}] \label{thm:zeckendorf} Every positive integer $n$ can be expressed uniquely as a sum of pairwise nonconsecutive Fibonacci numbers with index at least 2. \end{thm}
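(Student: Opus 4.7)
The plan is to prove existence and uniqueness separately, both by induction on $n$, using the greedy algorithm for existence and a sharp bound on nonconsecutive Fibonacci sums for uniqueness. Throughout, write $F_2 = 1, F_3 = 2, F_4 = 3, F_5 = 5,\ldots$ for the Fibonacci numbers indexed from $2$.

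For existence, I would proceed by strong induction on $n$. The base case $n=1 = F_2$ is immediate. For $n \ge 2$, let $F_k$ be the largest Fibonacci number with $F_k \le n$, so that $F_k \le n < F_{k+1}$. Then $n - F_k < F_{k+1} - F_k = F_{k-1}$, and since $n - F_k \ge 0$, either $n = F_k$ (and we are done) or $0 < n - F_k < F_{k-1}$, in which case the induction hypothesis gives a Zeckendorf representation of $n - F_k$ whose largest summand is some $F_j < F_{k-1}$, hence $j \le k-2$. Prepending $F_k$ to this representation yields a representation of $n$ whose indices are still pairwise nonconsecutive.

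For uniqueness, the key lemma I would establish first is that for any $k \ge 2$,
\begin{equation*}
F_2 + F_4 + \cdots + F_{2m} = F_{2m+1} - 1, \qquad F_3 + F_5 + \cdots + F_{2m+1} = F_{2m+2} - 1,
\end{equation*}
both provable by an easy induction on $m$. As a consequence, any sum $S$ of pairwise nonconsecutive Fibonacci numbers whose largest summand is $F_k$ satisfies $F_k \le S \le F_k + F_{k-2} + F_{k-4} + \cdots < F_{k+1}$. Now suppose $n$ admits two distinct Zeckendorf representations, and let $F_k, F_\ell$ be their respective largest terms. If $k \ne \ell$, say $k < \ell$, then the first representation is strictly less than $F_{k+1} \le F_\ell$, contradicting equality. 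Hence $k = \ell$; subtracting $F_k$ from both sides produces two distinct Zeckendorf representations of $n - F_k < F_{k-1}$, and an induction on $n$ finishes the argument (the base case $n=0$ having only the empty representation).

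The main obstacle, if any, is getting the bound on the largest summand just right: one must verify that after the greedy subtraction of $F_k$, the remainder $n - F_k$ is strictly less than $F_{k-1}$ rather than merely $\le F_{k-1}$, so that the induction produces a representation using only indices $\le k-2$ and avoids a forbidden consecutive pair with $F_k$. Once the two identities for alternating Fibonacci sums are in place, both halves of the proof are short. I would not labor the routine inductions further, as the bookkeeping is straightforward.
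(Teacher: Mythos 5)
Your proof is correct. Note, however, that the paper does not prove this theorem at all: it is stated as a classical result with citations to Lekkerkerker and Zeckendorf, so there is no internal argument to compare against. What you give is the standard proof --- existence by the greedy algorithm with the bound $n - F_k < F_{k-1}$ guaranteeing nonconsecutive indices, and uniqueness via the identities $F_2+F_4+\cdots+F_{2m}=F_{2m+1}-1$ and $F_3+F_5+\cdots+F_{2m+1}=F_{2m+2}-1$, which force any nonconsecutive sum with top term $F_k$ to lie in $[F_k, F_{k+1})$ --- and both halves are carried out correctly, including the one delicate point you flag (strict inequality $n-F_k<F_{k-1}$, so the recursive representation uses only indices at most $k-2$).
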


\begin{defn} The Zeckendorf representation of $n$ is the unique sequence $z_1(n),z_2(n),\ldots,z_k(n)$ of Fibonacci numbers such that $z_1(n)+\cdots+z_k(n)=n$, and for all $1\le i<k$, $z_i(n)<z_{i+1}(n)$, and $z_i(n)$ and $z_{i+1}(n)$ are not consecutive elements of the Fibonacci sequence. We write $\mcZ(n)=\{z_1(n),\ldots,z_k(n)\}$. \end{defn}

For notational convenience, if $|\mcZ(n)|<k$, then we set $z_k(n)=\infty$, and we say that $z_k(n)>m$ for all integers $m$.%\urban{OK, now I see what you want. The $z_l$ for $l>k$ are completely irrelevant, because they do not take part in the definition of $n$, so we can set them to anything. You choose the conditional statement. If... , so it is logically correct to reuse $k$. It is all logically correct. Perhaps, would you agree to move this part outside of the definition? I have previously read the definition as if k is a function of n, that is why confused.}\simon{Okay, deal!}%\urban{So here $k$ is a new index, right? The way I read is that the number of elements in Z(n) is k, but I do not think that is what you want. So how will we agree on this one? It seems that $z_l =0$ if $l>k$, so why say that it is infinity? So, I did not yet read the full proof of yours. I guess in that proof there is the explanation to this. Still just reading the definition as it stands, is confusing, because of the two issues i am seeing.}
%For notational convenience, if $|\mcZ(n)|<k$, then we set $z_k(n)=\infty$, and we say that $z_k(n)>m$ for all nonnegative integers $m$. 
%\urban{Let me see if I understand what you want. Is this correct?}
%\urban{Here I prefer to write: for all $\ell$ such that $|\mcZ(n)|<\ell$, then obviously $z_\ell(n)=0$, and I think we never use the phrase "and we say that $z_k(n)>m$ for all nonnegative integers $m$". If we do need it, we should keep it, but i wonder..}
%\simon{We actually do need the $\infty$ thing because we have all sorts of characterizations of $\mcP$ positions when $z_1(n-\cdots)$ is sufficiently large. But maybe $n-\cdots=0$, and we still want to include that case.}

%\begin{defn} The Zeckendorf representation of $n$ is the unique sequence $z_1(n),z_2(n),\ldots,z_k(n)$ of Fibonacci numbers such that, for all $1\le i< k$, $2z_i(n)<z_{i+1}(n)$ and $z_1(n)+z_2(n)+\cdots+z_k(n)=n$. We also let $\mcZ(n)=\{z_1(n),\ldots,z_k(n)\}$. \urban{I think the definition gives $|\mcZ(n)|=k$. for the moment I do not find this definition helpful, but I could be wrong. What is it you want to say?} For notational convenience, if $|\mcZ(n)|<k$, then we write $z_k(n)=\infty$, and we say that $z_k(n)>m$ for all nonnegative integers $m$. \end{defn}

\section{The Fibonacci word} \label{sec:Fibword}

The Fibonacci word $W^{x,y} = f_0f_1f_2\cdots$ is a string of digits from some two-letter alphabet $\{x,y\}$. It is an archetype of a so-called Sturmian word; see~\cite{Loth02} for much more on Sturmian words. There are many equivalent ways of generating it.

\begin{prop}\label{prop:fibword} The following constructions give rise to the same sequence $f_0f_1f_2\cdots$: \begin{enumerate} \item Let $S_0=x$ and $S_1=xy$. For $n\ge 2$, let $S_n=S_{n-1}S_{n-2}$ be the concatenation of strings. Then, for every $n$, $S_n$ is an initial string of $S_{n+1}$. The Fibonacci word $S_\infty$ is the limiting string of the sequence $\{S_0,S_1,S_2,\ldots\}$. \item The Fibonacci word is the string $f_0f_1f_2\cdots$, where $f_n=x$ if $1\not\in \mcZ(n)$ and $f_n=y$ if $1\in\mcZ(n)$. \item The Fibonacci word is the unique non-trivial word $u$ on the alphabet $(x,y)$ where the parallel update $x\rightarrow yz, y\rightarrow y$, for all letters in $u$, gives back the same word, but now on the alphabet $(y,z)$. \end{enumerate} \end{prop}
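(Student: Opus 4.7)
The plan is to prove the equivalence of (1), (2), and (3) in two stages: first (1) $\iff$ (2), then (1) $\iff$ (3).

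For (1) $\iff$ (2), I would proceed by induction on $n$, showing that the prefix of length $F_{n+2}$ of the word defined by the Zeckendorf rule in (2) equals $S_n$; this matches in length because the concatenation recurrence gives $|S_n| = F_{n+2}$. The base cases $n = 0, 1$ are direct: $\mcZ(0) = \varnothing$ and $\mcZ(1) = \{1\}$ give $xy$, matching $S_1$. For the inductive step, since $S_n = S_{n-1}S_{n-2}$, it suffices to show that for every $0 \le k < F_n$,
\[
1 \in \mcZ(F_{n+1}+k) \iff 1 \in \mcZ(k),
\]
which reduces to the stronger identity $\mcZ(F_{n+1}+k) = \{F_{n+1}\} \cup \mcZ(k)$. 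By Theorem~\ref{thm:zeckendorf}, this needs only the check that $\{F_{n+1}\} \cup \mcZ(k)$ is a legitimate Zeckendorf set, i.e.\ that the largest element of $\mcZ(k)$ is at most $F_{n-1}$ whenever $k < F_n$ — standard and immediate by a secondary induction.

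For (1) $\iff$ (3), the key step is to strip away the alphabet relabeling: the substitution $x \mapsto yz$, $y \mapsto y$ composed with the back-relabeling $y \mapsto x$, $z \mapsto y$ is precisely the classical Fibonacci morphism $\phi \colon x \mapsto xy,\ y \mapsto x$ on $\{x,y\}^*$, so the fixed-point condition in (3) becomes $\phi(u) = u$. Once this is in hand, an easy induction shows $\phi(S_n) = S_{n+1}$ for all $n \ge 0$ (base cases $\phi(x) = xy$, $\phi(xy) = xyx$; step via $\phi(S_{n+1}) = \phi(S_n)\phi(S_{n-1}) = S_{n+1}S_n = S_{n+2}$), and passage to the limit gives $\phi(S_\infty) = S_\infty$. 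For the uniqueness clause, any non-trivial fixed point $u$ must begin with $x$ (since $\phi(y) = x$ would otherwise contradict $\phi(u) = u$), and then $u = \phi^n(u)$ has $\phi^n(x) = S_n$ as a prefix for every $n$, forcing $u = S_\infty$.

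The main obstacle I expect is the Zeckendorf identity in the first equivalence: although elementary, the non-consecutivity clause requires careful index bookkeeping, and it is where the Fibonacci/Zeckendorf interplay truly enters the argument. The second equivalence, once one notices that the exotic substitution is just the standard Fibonacci morphism under a cosmetic alphabet change, reduces to routine manipulations with the recurrence $S_n = S_{n-1}S_{n-2}$.
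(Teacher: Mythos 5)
Your proposal is correct and complete. Note that the paper itself offers no proof of this proposition: it defers to the literature (Berstel, Knuth) and only remarks that item (3) is the standard Fibonacci morphism in disguise via the relabeling $y\mapsto x$, $z\mapsto y$ --- which is exactly the observation you make at the start of your second equivalence. Your first equivalence supplies the substantive content the paper omits: the identity $\mcZ(F_{n+1}+k)=\{F_{n+1}\}\cup\mcZ(k)$ for $0\le k<F_n$ is the right lemma, and its verification via uniqueness in Theorem~\ref{thm:zeckendorf} (largest element of $\mcZ(k)$ is at most $F_{n-1}$, hence not adjacent to $F_{n+1}$) is sound; the length bookkeeping $|S_n|=F_{n+2}$ matches the decomposition $S_n=S_{n-1}S_{n-2}$, so a two-step (strong) induction closes the argument. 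The second equivalence is likewise fine: $\phi(S_n)=S_{n+1}$ gives existence of the fixed point, and the forcing argument ($u$ must start with $x$, hence has every $\phi^n(x)=S_n$ as a prefix) gives uniqueness among non-trivial words, since any finite nonempty fixed point would have to be $y^k$, which $\phi$ does not fix. The only cosmetic point worth adding is the one-line justification, implicit in item (1) itself, that each $S_n$ is a prefix of $S_{n+1}$ (immediate from $S_{n+1}=S_nS_{n-1}$ together with $S_1=xy$ beginning with $S_0=x$), so that the limit $S_\infty$ and the passage to $\phi(S_\infty)=S_\infty$ are well defined.
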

%\urban{$F_4=3$. this should correspond to $a_2$. The word begins: $xyxxyxyx\ldots $, I think numbering starting with 1 is also possible, $a_1a_2\ldots$, because then we read off the indexes $i$ as $a_i=x$ if and only if the smallest index of a Fibonacci number in $\mcZ$ is even. OK, now I see that these descriptions are equivalent. I did not use the one you propose before.}
See~\cite[p.\ 20]{Berstel86} or~\cite{Knuth97} for more details on the Fibonacci word, including a proof of Proposition~\ref{prop:fibword}, as well as other descriptions and interesting properties. Item (3) is obvious, by interpreting $y$ as $x$ and $z$ as $y$, which is the standard ``Fibonacci morphism". The beginning of the Fibonacci word is \[xyxxyxyxxyxxyxyxxyxyxxyxxyxyxxyxxy.\]
%In the following, we make use of partial sums of the Fibonacci word, after substituting certain integers for $x$ and $y$. For instance, if we substitute $x=3$ and $y=7$, and among $a_0,\ldots,a_m$ there are $i$ $x$'s and $j=m+1-i$ $y$'s, then the partial sum is $3i+7j$. We sometimes write $W_i^{3,7}$ rather than $a_i$ when we substitute values for $x$ and $y$.
In the following, we make use of partial sums of the Fibonacci word, after substituting certain integers for $x$ and $y$. For instance, if we substitute $x=3$ and $y=7$, and among the first $m$ letters, there are $i$ $x$'s and $j=m+1-i$ $y$'s, then the partial sum is $3i+7j$. We sometimes write $W_i^{3,7}$, when we refer to the $i^\text{th}$ value $f_i\in \{3,7\}$ of $W^{3,7}$.
In view of item Proposition~\ref{prop:fibword} (3), a parallel update will often be interpreted as 
\begin{enumerate}
\item[(T1)] $F_i\rightarrow F_{i}$;
\item[(T2)] $F_{i+1}\rightarrow F_{i}F_{i-1}$,
\end{enumerate}
for some $i>1$.
% \begin{enumerate}
% \item[(T1)] $F_i\rightarrow F_{i}$, or 
% \item[(T2)] $F_{i}\rightarrow F_{i-1}F_{i-2}$,
% \end{enumerate}
% for some $i>2$.

We use the following lemmas on the sets of partial sums of Fibonacci words, which are easy consequences of part (3) of Proposition~\ref{prop:fibword}. These sets can also be described naturally in terms of the Zeckendorf representation, which would provide alternative proofs of some of our theorems in the rest of the article.
%\urban{Hi Simon! where is the chatbox in this thing? I am in the middle of doing some stuff, so please don't try to understand just now what I am doing :) you are right below}

\begin{lem} \label{lem:partialsums} Suppose that $b\le a$. Then the set of partial sums of $w_a := W^{F_{a+1},F_a}$ %\simon{what is $w^{\cdots}$? I don't think we defined this notation; should the $w$'s be $f$'s?} 
is a subset of the set of partial sums of $W^{F_{b+1}, F_b}$. That is, let \[PS(w_a)=\left\{k:k=\sum_{i=0}^m W_i^{F_{a+1},F_a} \text{ for some } m\right\}.\] 
%\simon{I think the notation $PS(a)$ makes more sense. Can I change it back?} \urban{I will use the new notation later, so makes more sense to have just one notation for a partial sum of words}
Then $PS(w_a)\subseteq PS(w_b)$.
\end{lem}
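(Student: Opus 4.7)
The plan is to reduce the statement to the single-step inequality $PS(w_a) \subseteq PS(w_{a-1})$, and then iterate. By Proposition~\ref{prop:fibword}(3), in the (T1), (T2) formulation, the parallel substitution
\[
\sigma: F_{a+1} \longmapsto F_a F_{a-1}, \qquad F_a \longmapsto F_a
\]
applied letter-by-letter to the Fibonacci word $w_a = W^{F_{a+1},F_a}$ produces (after relabeling the new alphabet $\{F_a,F_{a-1}\}$ in the obvious way) the Fibonacci word $w_{a-1} = W^{F_a,F_{a-1}}$.

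Two properties of $\sigma$ are essential. First, because $\sigma$ acts letter-by-letter and commutes with concatenation, the image under $\sigma$ of any prefix of $w_a$ is itself a prefix of $w_{a-1}$. Second, $\sigma$ is \emph{sum-preserving} as a map of weighted words: the letter $F_{a+1}$ of $w_a$ carries value $F_{a+1} = F_a + F_{a-1}$, which is exactly the sum of the two letters $F_a F_{a-1}$ of its image, while the letter $F_a$ has the same value before and after. Combining these two observations, for any prefix of $w_a$ with partial sum $S \in PS(w_a)$, the corresponding prefix of $w_{a-1}$ obtained by applying $\sigma$ also has partial sum $S$, so $S \in PS(w_{a-1})$. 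This yields $PS(w_a) \subseteq PS(w_{a-1})$.

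Iterating this inclusion, that is, inducting on $a - b \ge 0$ (with the trivial base case $a = b$), we conclude $PS(w_a) \subseteq PS(w_b)$ whenever $b \le a$, as required.

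The only real obstacle is a clean articulation of the morphism step: one must be precise about the identification between the two-symbol alphabet $\{F_{a+1},F_a\}$ and the ``shifted'' two-symbol alphabet $\{F_a,F_{a-1}\}$, so that the statement ``$\sigma(w_a)=w_{a-1}$'' is justified by Proposition~\ref{prop:fibword}(3) rather than merely asserted. Once that identification is made, both the prefix-to-prefix property and the sum-preservation are immediate from the identity $F_{a+1}=F_a+F_{a-1}$, and no further computation is needed.
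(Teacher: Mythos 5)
Your proof is correct and follows essentially the same route as the paper's: reduce to the case $b=a-1$, apply the Fibonacci morphism of Proposition~\ref{prop:fibword}(3) so that $F_{a+1}\mapsto F_aF_{a-1}$ and $F_a\mapsto F_a$, and conclude from the identity $F_{a+1}=F_a+F_{a-1}$. Your write-up is somewhat more explicit about the prefix-to-prefix and sum-preservation properties, but no new idea is involved.
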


\begin{proof} It suffices to check this when $b=a-1$. In this case, any instance of $F_{a+1}$ in the Fibonacci word turns into $F_a,F_{a-1}$ after applying the morphism of part (3) of Proposition~\ref{prop:fibword} and making the substitutions. Any instance of $F_a$ remains as $F_a$. The result follows since $F_{a+1}=F_a+F_{a-1}$. \end{proof}

\begin{lem} \label{lem:lastPS} Suppose that $n\in PS(w_a)\setminus PS(w_{a+1})$. Then $n-F_{a+1}\in PS(w_{a+2})$. \end{lem}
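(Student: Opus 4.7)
The plan is to leverage the morphism (T1)/(T2) from Proposition~\ref{prop:fibword}(3), iterated to relate $w_a$ both to $w_{a+1}$ and to $w_{a+2}$. Applying the morphism to $w_{a+1}$ (whose letters are $F_{a+2}$ and $F_{a+1}$) sends each $F_{a+2}$ to the pair $F_{a+1}F_a$ and leaves each $F_{a+1}$ fixed, producing $w_a$. This endows $w_a$ with a canonical block factorization, which I will call the \emph{$w_{a+1}$-factorization}, into blocks of length $1$ (a lone $F_{a+1}$) or length $2$ (an $F_{a+1}F_a$), and $PS(w_{a+1})$ corresponds precisely to the set of partial sums of $w_a$ whose terminal position is a block boundary of this factorization.

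Under the hypothesis $n \in PS(w_a) \setminus PS(w_{a+1})$, the terminal position of $n$ in $w_a$ must lie strictly inside some block. The only blocks with interior points are the length-$2$ blocks, and the unique interior position of such a block is the point immediately after its first letter $F_{a+1}$. Letting $S$ denote the sum of all complete $w_{a+1}$-blocks preceding this 2-block, one has $n = S + F_{a+1}$, so $n - F_{a+1} = S$, and by construction $S \in PS(w_{a+1})$.

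To upgrade $S \in PS(w_{a+1})$ to $S \in PS(w_{a+2})$, I would apply the morphism a second time and recast $w_a$ via the \emph{$w_{a+2}$-factorization}: each $F_{a+3}$ of $w_{a+2}$ yields a length-$3$ block $F_{a+1}F_aF_{a+1}$ in $w_a$, and each $F_{a+2}$ of $w_{a+2}$ yields a length-$2$ block $F_{a+1}F_a$. The $w_{a+2}$-factorization refines into the $w_{a+1}$-factorization by splitting each length-$3$ block into $F_{a+1}F_a$ followed by a single $F_{a+1}$. The key observation is then that every length-$2$ $w_{a+1}$-block $F_{a+1}F_a$ either is itself a $w_{a+2}$-block (coming from a standalone $F_{a+2}$ in $w_{a+2}$) or occupies the first two positions of a length-$3$ $w_{a+2}$-block (coming from an $F_{a+3}$ in $w_{a+2}$); in either case the 2-block containing our terminal position begins at a $w_{a+2}$-block boundary, so $S$ is also a partial sum in the $w_{a+2}$-factorization, i.e.\ $S \in PS(w_{a+2})$.

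The main obstacle I anticipate is articulating this key observation cleanly: one must verify that in $w_{a+1}$ every occurrence of $F_{a+2}$ is either a standalone image (from $F_{a+2}$ of $w_{a+2}$) or the \emph{first} letter of the two-letter image $F_{a+2}F_{a+1}$ of an $F_{a+3}$, and never appears as a second letter. This is, however, immediate from the morphism, since $F_{a+2}F_{a+1}$ is the only image of length greater than one and it begins with $F_{a+2}$. A minor technical caveat is the boundary case $n = F_{a+1}$, where $S = 0$; this forces us to regard the empty partial sum as lying in $PS(w_{a+2})$, which is natural if one admits $m = -1$ in the definition of $PS$.
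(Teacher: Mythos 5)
Your proof is correct and follows essentially the same route as the paper's: identify that $n$ terminates at an $F_{a+1}$ immediately followed by an $F_a$ (i.e.\ inside the image of an $F_{a+2}$), subtract $F_{a+1}$ to land on a block boundary of $w_{a+1}$, and then observe that this $F_{a+2}$ begins an image block under the next application of the morphism, so the sum to its left is also a partial sum of $w_{a+2}$. Your version is somewhat more explicit than the paper's (in particular the observation that $F_{a+2}$ never occurs as the second letter of an image, and the $m=-1$ convention making $0\in PS(w_{a+2})$, which the paper uses implicitly), but the argument is the same.
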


\begin{proof} Suppose $n=\sum_{i=0}^m W_i^{F_{a+1},F_a}$, and we use the alphabet $(y,z)=(F_{a+1},F_a)$. We note that $n\in PS(w_a)\setminus PS(w_{a+1})$ if and only if $f_m=y$ and $f_{m+1}=z$, which follows immediately from part (3) of Proposition~\ref{prop:fibword}. Each instance of $yz$ is replaced by $x$, when using the reverse direction. This implies that $n-F_{a+1}\in PS(w_{a+1})$. Since $x$ represents $F_{a+2}$, the largest Fibonacci term in the word $w_{a+1}$, in going to $w_{a+2}$, the partial sum of all letters to the left of $x$ will remain a partial sum; this follows by applying the reverse of part (3) of Proposition~\ref{prop:fibword}.
%\urban{If we want to be more formal we can do it. the idea is correct, I think this version is a little easier than yours? when we prove the main theorem of the words then we will need not just the right idea but also the best notation, tomorrow I think we can finish it.. at least now I will have the time also in the weekend I can work some}  %\simon{continue?}\urban{yes, just a little more..}
%In this case, either $f_{m-1}=y$, or $f_{m-1}=z$ and $f_{m-2}=y$. Using the reverse morphism of part (3) of Proposition~\ref{prop:fibword}, in the former case, that is when $f_{m-1}f_{m}f_{m+1}=yyz$, then 
%the $y$ becomes a $x=F_{a+1}$, which is preceded by an $x$ in the next iteration and hence becomes an $x$ after another application of the morphism. Hence $n-F_{a+1}\in PS(w_{a+2})$. In the latter case, where $f_{m-1}=y$ and $f_{m-2}=x$, one application of the morphism sends the $xy$ to $x$, and then, since the next element is also an $x$, the following morphism sends it to $x$. Hence again $n-F_{a+1}\in PS(w_{a+2})$. 
\end{proof}

%\simon{Not happy with how this is written.}\urban{I think you got the idea right}

%\begin{proof} This also follows from the morphism of part (3) of Proposition~\ref{prop:fibword}. The only way that $n$ can appear in $PS(a)$ but not $PS(a+1)$ is if there is an $F_{a+1},F_a$ occurring in $W^{F_{a+1},F_a}$ that turns into an $F_{a+2}$ in $W^{F_{a+2},F_{a+1}}$. This would imply that $n-F_{a+1}\in PS(a+1)$. However, since the update rules imply that the Fibonacci word never has two $y$'s in a row, we actually have $n-F_{a+1}\in PS(a+2)$. \simon{Write this properly.} \end{proof}

Observe that $PS(w_1)=\NN$ (since $F_1=F_2=1$), so that every nonnegative integer is in some $PS(w_a)$. Furthermore, $\bigcap_{a\ge 1} PS(w_a)=\{0\}$.

\section{$\mcP$ positions in one-pile \textsc{Fibonacci nim}} \label{sec:Ppos}

The winning strategy for one-pile \textsc{Fibonacci nim} was described by Whinihan. Consider the position $(n;r)$. If $z_1(n)\le r$, then $(n;r)$ is an $\mcN$ position, and removing $z_1(n)$ stones is a winning move. If $z_1(n)>r$, then $(n;r)$ is a $\mcP$ position, and there are no winning moves.

It is also possible to characterize the $\mcN$ and $\mcP$ positions in terms of the Fibonacci word. This approach will be useful for our analysis of the multi-pile game.

\begin{thm}\label{thm:alg} Fix a take-away size $r$. There is a unique Fibonacci number $F_t$ so that $F_t\le r<F_{t+1}$. The position $(n;r)\in\mcP$ if and only if $n$ is a partial sum of $W^{F_{t+1},F_{t}}$, i.e.\ if and only if there is some $m$ so that \begin{equation}\label{eq:fibnim}
n = \sum_{i=0}^m W^{F_{t+1},F_t}_i
\end{equation} 
\end{thm}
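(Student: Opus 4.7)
The preceding Whinihan characterization gives $(n;r)\in\mcP$ iff $z_1(n)>r$; combined with the facts that $z_1(n)$ is itself a Fibonacci number and that $F_t\le r<F_{t+1}$, this becomes the equivalence $(n;r)\in\mcP\Leftrightarrow z_1(n)\ge F_{t+1}$. Hence Theorem~\ref{thm:alg} reduces, for each $t\ge 1$, to the set identity
\[
PS(w_t)=\{0\}\cup\{n\ge 1 : z_1(n)\ge F_{t+1}\},
\]
and this is what I would prove.

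For $n\ge 1$, define $t^*(n):=\max\{t\ge 1 : n\in PS(w_t)\}$. Lemma~\ref{lem:partialsums} shows that $\{t\ge 1 : n\in PS(w_t)\}$ is downward closed in the positive integers, and the observation recorded just after Lemma~\ref{lem:lastPS} that $\bigcap_{a\ge 1}PS(w_a)=\{0\}$ guarantees that $t^*(n)$ is finite. Writing $z_1(n)=F_s$, the set identity above is equivalent to the pointwise statement $t^*(n)=s-1$, which I would establish by strong induction on $n$.

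The inductive step runs as follows. By the very definition of $t^*(n)$, we have $n\in PS(w_{t^*(n)})\setminus PS(w_{t^*(n)+1})$, so Lemma~\ref{lem:lastPS} applied with $a=t^*(n)$ gives $n-F_{t^*(n)+1}\in PS(w_{t^*(n)+2})$; in particular $F_{t^*(n)+1}\le n$. If $n-F_{t^*(n)+1}=0$ then $n=F_{t^*(n)+1}$ and $s=t^*(n)+1$ directly. Otherwise the inductive hypothesis, applied to $n-F_{t^*(n)+1}<n$, yields $t^*(n-F_{t^*(n)+1})=s'-1$, where $F_{s'}:=z_1(n-F_{t^*(n)+1})$. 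But $n-F_{t^*(n)+1}\in PS(w_{t^*(n)+2})$ forces $t^*(n-F_{t^*(n)+1})\ge t^*(n)+2$, hence $s'\ge t^*(n)+3$. Now rewrite $n=F_{t^*(n)+1}+(n-F_{t^*(n)+1})$; by the inductive information the Zeckendorf representation of $n-F_{t^*(n)+1}$ has smallest term $F_{s'}$ with $s'\ge t^*(n)+3$. Prepending $F_{t^*(n)+1}$ preserves non-consecutivity (the index gap is at least $2$), so this is a valid Zeckendorf representation of $n$. Uniqueness (Theorem~\ref{thm:zeckendorf}) then forces $z_1(n)=F_{t^*(n)+1}$, i.e.\ $s=t^*(n)+1$, as required.

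The main obstacle I anticipate is ensuring that the Zeckendorf reassembly in the last step is genuinely valid, that is, that the smallest two Fibonacci indices in the rebuilt decomposition are separated by at least $2$. This is precisely what the bound $s'\ge t^*(n)+3$ buys us, and it is in turn the reason Lemma~\ref{lem:lastPS} is formulated with the two-step jump $w_a\rightsquigarrow w_{a+2}$ (rather than $w_a\rightsquigarrow w_{a+1}$, which would not suffice for non-consecutivity). Granted this gap, the game-theoretic characterization of Whinihan and the Fibonacci-word characterization align exactly.
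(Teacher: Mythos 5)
Your proof is correct, but it follows a genuinely different route from the paper's. The paper proves Theorem~\ref{thm:alg} directly as a game-theoretic verification in the style of Proposition~\ref{prop:partition}: it shows that from a position $(n;r)$ with $n\in PS(w_t)$ no legal move lands in the appropriate partial-sum set (using Lemma~\ref{lem:partialsums} for small move dynamics and the reverse of the morphism (T2) for the cases $b=t+1,t+2$), and that from any other position the move $n\mapsto n-F_{b+1}$, with $b$ maximal such that $n\in PS(w_b)$, reaches a claimed $\mcP$-position by Lemma~\ref{lem:lastPS}. You instead take Whinihan's Zeckendorf characterization ($(n;r)\in\mcP$ iff $z_1(n)>r$, stated at the start of \S\ref{sec:Ppos}) as given and reduce the theorem to the purely combinatorial identity $PS(w_t)=\{0\}\cup\{n\ge 1:z_1(n)\ge F_{t+1}\}$, which you establish by induction using the same two lemmas; your key step, decomposing $n=F_{t^*(n)+1}+(n-F_{t^*(n)+1})$ and checking that the index gap $s'\ge t^*(n)+3$ makes the reassembled representation a valid Zeckendorf representation, is sound, and the two-step jump in Lemma~\ref{lem:lastPS} is indeed exactly what guarantees non-consecutivity. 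This is precisely the ``alternative proof via the Zeckendorf representation'' alluded to in the remark before Lemma~\ref{lem:partialsums}. The trade-off: your argument is shorter and cleanly isolates the combinatorial content (the equivalence of the word description and the Zeckendorf description), but it is not self-contained, since it presupposes Whinihan's solution, which the paper states without proof; the paper's direct argument re-derives the one-pile solution from scratch and, more importantly, its structure is the template that generalizes to the two-pile word result (Theorem~\ref{thm:alg2}), which has no pre-existing Zeckendorf characterization to lean on. Only a cosmetic remark: the displayed sum in the theorem starts at $i=0$, so you should note (as you implicitly do with the explicit $\{0\}$) that the empty partial sum $n=0$ is included by convention, consistent with the paper's assertion that $\bigcap_{a\ge 1}PS(w_a)=\{0\}$.
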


\begin{proof} Suppose first that $(n;r)$ is of the given form. We must demonstrate that there is no move to a position of the same form. Suppose that the new position is $(n-s;2s)$, with $s\le r < F_{t+1}$. Let $b$ be such that $F_b\le 2s < F_{b+1}$. Then $F_b\le 2s < 2F_{t+1}<F_{t+3}$, gives $b\le t+2$. We must prove that there is no $m$ such that 
\[n-s = \sum_{i=0}^m W^{F_{b+1},F_b}_i.\]
Since $s<F_b$ and $\min\{F_{b+1},F_b\} = F_{b}$, if $b\le t$, then (\ref{eq:fibnim}) together with Lemma~\ref{lem:partialsums} gives the claim. Suppose therefore that $b\in \{t+1, t+2\}$. If it were possible to play in $PS(w_{t+1})$ or $PS(w_{t+2})$, then, by definition of $t$, by (\ref{eq:fibnim}) and by the reverse of the (T2) composition (applied once or twice), the Fibonacci number $F_t$ has to be subtracted from $n$. Then, again by (T2), this gives the contradiction.
%Then, either (\ref{eq:fibnim_b}) holds, or, by applying Lemma~\ref{lem:partialsums} in the reverse direction, and by $F_{t+2}=F_b$, the inequality~(\ref{eq:s<F_b}) still gives the claim.

Suppose next that $(n;r)$ is not of the form in the statement of the theorem. Then there is an $m$ such that 
\begin{equation}\label{eq:fibnim2}
\sum_{i=0}^m W^{F_{t+1},F_t}_i < n < \sum_{i=0}^{m+1} W^{F_{t+1},F_t}_i.
\end{equation} 
%We must show that there is a move to a candidate $\mcP$ position $(n-s,2s)$ with \[n-s = \sum_{i=0}^t W^{F_{b+1},F_b}_i,\] with $b$ such that $F_b\le 2s<F_{b+1}$, for some $t$.
There is a unique positive integer $b$ so that $n\in PS(w_b)\setminus PS(w_{b+1})$. By Lemma~\ref{lem:lastPS}, $n-F_{b+1}\in PS(w_{b+2})$.  Since $F_{b+2}\le 2F_{b+1}<F_{b+3}$, $(n-F_{b+1},2F_{b+1})$ is a $\mcP$-position.
\end{proof}

\section{Two-pile \textsc{Fibonacci nim}} \label{sec:twopile}
\subsection{The Zeckendorf approach}
The $\mcP$ positions of the two-pile \textsc{Fibonacci nim} game $(m,m+k;r)$ can also be expressed in terms of the Fibonacci word as a simple generalization of that of the one-pile game. Fix one pile size $m$ and the initial take-away amount $r$.

\begin{thm} \label{thm:twopiles} Let $t$ be such that $F_t\le r<F_{t+1}$. Then the following is a complete classification of the outcomes of the position $(m,m+k;r)$: \begin{enumerate} \item If $z_1(k)\le F_t$, then $(m,m+k;r)\in\mcN$. \item If $z_1(k)\ge F_{t+2}$, then $(m,m+k;r)\in\mcP$. \item If $z_1(k)=F_{t+1}$ and $m<F_t$, then $(m,m+k;r)\in\mcP$. \item If $m\ge F_t$ and $z_1(k)=F_{t+1}$, and either $z_2(k)=\infty$ or $z_2(k)=F_{t+d}$ where $m<F_t+F_{t+1}+\cdots+F_{t+d-3}$, then let $s$ be the unique integer so that $F_t+F_{t+1}+\cdots+F_{t+s-1}\le m<F_t+F_{t+1}+\cdots+F_{t+s}$. Then \begin{enumerate} \item If $s$ is odd, then $(m,m+k;r)\in\mcN$, \item If $s$ is even, then $(m,m+k;r)\in\mcP$. \end{enumerate} \item If $z_1(k)=F_{t+1}$, and $z_2(k)=F_{t+d}$, and $m\ge F_t+F_{t+1}+\cdots+F_{t+d-3}$, then \begin{enumerate} \item If $d$ is odd, then $(m,m+k;r)\in\mcN$, \item If $d$ is even, then $(m,m+k;r)\in\mcP$. \end{enumerate} \end{enumerate} \end{thm}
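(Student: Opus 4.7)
The plan is to argue by strong induction on $m+k$, with Theorem~\ref{thm:alg} providing the base case $m=0$. In each of the five cases one either exhibits an explicit winning move to a position of strictly smaller total size which lies in one of the stated $\mcP$ cases, or verifies that every legal move produces a position in one of the stated $\mcN$ cases. A useful auxiliary observation is that equal piles $(m,m;r)$ are always in $\mcP$ via the mirror strategy, since the bound only ever grows, so mirroring is always legal.

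For case (1), with $z_1(k) = F_j$ and $j \le t$, the natural candidate is to remove $F_j$ from the larger pile, sending us to $(m, m+k-F_j;\, 2F_j)$. The new threshold is $t' = j+1$ (since $F_{j+1}\le 2F_j < F_{j+2}$), and the new difference satisfies $z_1(k-F_j) = z_2(k) \ge F_{j+2} = F_{t'+1}$, placing us in case (2), (3), (4), or (5) with respect to $t'$. Depending on the Zeckendorf expansion of $k$ and the size of $m$, one may instead need to remove from the smaller pile or pick a different $F_i$ in order to guarantee landing in a $\mcP$ subcase; this flexibility in the choice of move is what ultimately accommodates the parity conditions of (4) and (5). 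For case (2), any move of size $s$ with $s \le r < F_{t+1}$ changes the difference by $\pm s$, and because $z_1(k)\ge F_{t+2}$ while $s < F_{t+1}$, the result acquires a small leading Zeckendorf digit that puts the new position into case (1) with respect to the new threshold, which is $\mcN$ by induction.

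For cases (3), (4), and (5), where $z_1(k) = F_{t+1}$, the parity tags on $s$ and $d$ encode whether a nested cascade of Fibonacci-sized removals eventually terminates on the current player's move or on the opponent's. In an $\mcN$ subcase (odd $s$ in (4a) or odd $d$ in (5a)) the winning move should be to peel off one term of the telescoping sum $F_t + F_{t+1} + \cdots$ by an appropriate removal, flipping the parity and leaving the opponent in a $\mcP$ subcase. For the $\mcP$ subcases one verifies directly that every available move either breaks the Zeckendorf alignment---dropping us to a position of type (1), (4a), or (5a)---or flips the parity in the wrong direction, so that the opponent then faces an $\mcN$ position classified by the inductive hypothesis.

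The main obstacle will be the bookkeeping in cases (4) and (5). The integers $s$ and $d$ record two different depths: how far the Zeckendorf decomposition of $m$ has climbed above $F_t$, and where the second Zeckendorf digit of $k$ sits. Keeping these two cascades synchronised, especially near boundary values such as $m = F_t + F_{t+1} + \cdots + F_{t+s-1}$ or $z_2(k) = \infty$, is the technical heart of the proof. The cleanest route is probably to first isolate a small combinatorial lemma about the interaction of the Fibonacci word with the partial sums $F_t + F_{t+1} + \cdots + F_{t+j}$, in the spirit of Lemmas~\ref{lem:partialsums} and~\ref{lem:lastPS}, after which the inductive step reduces to a finite case check.
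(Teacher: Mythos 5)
Your overall framework is the one the paper uses: check that every claimed $\mcN$ position has a move to a claimed $\mcP$ position and that every claimed $\mcP$ position has only moves to claimed $\mcN$ positions, the five types being exhaustive, and conclude via Proposition~\ref{prop:partition}. Your handling of case (2) (a move of size $s<F_{t+1}$ leaves a difference whose leading Zeckendorf term is at most $2s$ --- for moves in the larger pile this is exactly Lemma~\ref{lem:smallfibs} --- hence a type (1) position) and your picture of the forced cascade governing the parities in cases (4) and (5) both match the paper; the mirror-strategy observation is correct but not needed.

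The genuine gap is in case (1), which is the one place requiring a specific, non-obvious move. Write $z_1(k)=F_e$. Your primary candidate, removing $F_e$ from the larger pile, gives difference $k-F_e$, move bound $2F_e$, new threshold $t'=e+1$, and $z_1(k-F_e)=z_2(k)\ge F_{e+2}=F_{t'+1}$. If $z_2(k)\ge F_{e+3}$ this is type (2), and if $z_2(k)=F_{e+2}$ with $m<F_{e+1}$ it is type (3); in both cases you are done. But if $z_2(k)=F_{e+2}$ and $m\ge F_{e+1}$, you land in type (4) or (5), whose outcome depends on parities you have not controlled, so this move may be losing. You acknowledge that ``one may instead need to remove from the smaller pile or pick a different $F_i$,'' but that some such move works is precisely what must be proved, and your suggestion that this flexibility ``accommodates the parity conditions of (4) and (5)'' points the wrong way: the parities are resolved internally to types (4)/(5) by the forced chain, and from type (1) one must \emph{avoid} landing there. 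The move that works is to remove $F_{e-1}$ from the \emph{smaller} pile: the difference becomes $k+F_{e-1}$, and since $F_{e-1}+F_e+F_{e+2}=F_{e+1}+F_{e+2}=F_{e+3}$, we get $z_1(k+F_{e-1})\ge F_{e+3}$, while the new bound $2F_{e-1}<F_{e+1}$ forces the new threshold to be at most $e$; hence the resulting position is type (2). Without this move (or an equally explicit substitute) the induction does not close. A smaller omission of the same kind occurs in cases (4a)/(5a), where ``an appropriate removal'' needs to be pinned down as removing exactly $F_t$ from the smaller pile, this being the unique move whose target is not already of type (1); that uniqueness is also what makes the $\mcP$ verification in (4b)/(5b) go through.
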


\begin{rem} For $s\ge 1$, the partial sums $F_t+F_{t+1}+\cdots+F_{t+s-1}$ can be written more concisely: \[ F_t+F_{t+1}+\cdots+F_{t+s-1} = (F_{t+2}-F_{t+1}) + (F_{t+3}-F_{t+2}) + \cdots + (F_{t+s+1}-F_{t+s}) = F_{t+s+1}-F_{t+1}.\] However, in this context it is more natural to leave the series unsummed, as it serves as a reminder of how the game might be played. \end{rem}

Since Theorem~\ref{thm:twopiles} is a bit complicated, let us say something about how it should be interpreted from a player's point of view. First, if it possible to remove $z_1(k)=:F_e$ stones from the $m+k$ pile, then either this move or removing $F_{e-1}$ stones from the $m$ pile is winning. (See the proof for a more complete description of when to play each of these moves.) If it is not possible to remove $z_1(k)$ stones from the $m+k$ pile, then every move \emph{in that pile} is losing. However, there may still be winning moves in the $m$ pile, and indeed the only move that \emph{might} win is to remove $F_t$ stones from the $m$ pile. If $z_1(k)\ge F_{t+2}$, then this move loses. If $z_1(k)=F_{t+1}$, then the situation is rather complicated, leading to cases (3)--(5) in the theorem. However, when actually playing the game, the structure of the theorem is not terribly important: if all moves except for one are clearly losing and the remaining one leads to complications, by all means play the complicated one!

%\simon{Theorem below to be removed later, as it is better stated in the previous theorem, and is also correct.} \begin{thm} \label{thm:twopiles} Let $t$ be such that $F_t\le r<F_{t+1}$. Then the position $(m,m+k;r)\in\mcP$ if and only if one of the following holds: \begin{enumerate} \item $F_t>m$ and $z_1(k)>r$. \item $F_t\le m$ and $z_1(k)\ge F_{t+2}$. \item $F_t\le m$, $z_1(k)=F_{t+1}$, and $z_2(k)\ge F_{t+4}$. \end{enumerate} \simon{The last case is not quite right, since it doesn't work if $m$ is very large.} \end{thm}

We now turn to the proof. One key input is the following Lemma, which we used in our earlier work on the local move dynamic game:

\begin{lem}[\cite{LRS14}, Lemma 4.3] \label{lem:smallfibs} Suppose $n>1$ and $1\le k<z_1(n)$. Then $z_1(n-k)\le 2k$. \end{lem}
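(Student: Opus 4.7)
The plan is to split the proof into two stages: first reduce to the case where $n$ itself is a Fibonacci number, and then prove that reduced claim by strong induction on its index.

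For the reduction, write $F_j = z_1(n)$ and expand $n = F_j + F_{j_2} + \cdots + F_{j_\ell}$ in Zeckendorf form, where $j_2 \ge j + 2$. Then $n - k = (F_j - k) + F_{j_2} + \cdots + F_{j_\ell}$. Because $0 < F_j - k < F_j$, the Zeckendorf expansion of $F_j - k$ uses only indices at most $j - 1$, and these differ by at least three from $j_2$. Concatenating the two pieces therefore already gives the Zeckendorf expansion of $n - k$ with no adjustment, and in particular $z_1(n - k) = z_1(F_j - k)$. It thus suffices to prove the reduced claim: for every $j \ge 3$ and every $1 \le k < F_j$, $z_1(F_j - k) \le 2k$.

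I would prove the reduced claim by strong induction on $j$, checking the cases $j = 3, 4$ directly and then splitting the inductive step into three subcases according to where $k$ sits. If $F_{j-1} \le k < F_j$, then $F_j - k \le F_{j-2} < F_{j-1} \le k$, so $z_1(F_j - k) \le F_j - k < 2k$ outright. If $k \le F_{j-2}$, I would write $F_j - k = F_{j-1} + (F_{j-2} - k)$: the case $k = F_{j-2}$ is immediate from $F_{j-1} \le 2F_{j-2}$, while for $k < F_{j-2}$ the inductive hypothesis at index $j - 2$ gives $z_1(F_{j-2} - k) \le 2k$, and the prefix $F_{j-1}$ attaches cleanly. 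If $F_{j-2} < k < F_{j-1}$, I would use $F_j - k = F_{j-2} + (F_{j-1} - k)$; now $F_{j-1} - k < F_{j-3}$, so the residue's Zeckendorf expansion has indices at most $j - 4$, the prefix $F_{j-2}$ attaches cleanly, and the inductive hypothesis at index $j - 1$ bounds $z_1(F_{j-1} - k) \le 2k$.

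The main obstacle driving the case split is ensuring that when one prepends a Fibonacci prefix to the Zeckendorf expansion of the residue, no ``carrying'' is triggered---that is, the concatenation remains a valid Zeckendorf expansion so that its smallest term genuinely equals $z_1$ of the residue. The three cases are arranged precisely so that the prefix index and the largest index appearing in the residue's expansion differ by at least two in every situation, which is exactly the non-consecutivity condition required.
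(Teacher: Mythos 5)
Your argument is correct and complete. The paper itself does not reprove this lemma---it simply cites Lemma~4.3 of the earlier work \cite{LRS14}---so there is no in-text proof to compare against, but your write-up stands on its own. The reduction to $n=F_j$ is sound: since $0<F_j-k<F_j$, every index in $\mcZ(F_j-k)$ is at most $j-1$, which is separated from $j_2\ge j+2$ by at least $3$, so the concatenation is already a Zeckendorf expansion and $z_1(n-k)=z_1(F_j-k)$. The three cases of the induction partition $[1,F_j)$ exactly, the non-consecutivity checks for the prefixes $F_{j-1}$ (against indices $\le j-3$) and $F_{j-2}$ (against indices $\le j-4$) are right, and the base cases and the boundary subcase $k=F_{j-2}$ (where $F_{j-1}\le 2F_{j-2}$) are handled. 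One small simplification you could make: in the third case $F_{j-2}<k<F_{j-1}$ you do not actually need the inductive hypothesis, since $F_{j-1}-k<F_{j-3}<k$ already gives $z_1(F_{j-1}-k)\le F_{j-1}-k<2k$ directly (just as in your first case).
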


\begin{proof}[Proof of Theorem~\ref{thm:twopiles}] We work one case at a time. For the claimed $\mcN$ positions, we show that there is a move to a position that we claim to be in $\mcP$, and for the claimed $\mcP$ positions, we show that every move is to a claimed $\mcN$ position. By Proposition~\ref{prop:partition}, the claimed $\mcN$ and $\mcP$ positions are, in fact, the $\mcN$ and $\mcP$ positions. Observe that every position of two-pile \textsc{Fibonacci nim} is of type (1), (2), (3), (4a), (4b), (5a), or (5b).

We begin with positions of type (1). Suppose first that $z_2(k)\ge F_{t+3}$. Then we can remove $z_1(k)=:F_e$ stones from the $m+k$ pile to get to $(m,m+k-z_1(k);2z_1(k))$, which is of type (2), since $z_1(k-z_1(k))=z_2(k)\ge F_{t+3}$, which is at least as large as the second Fibonacci number after $2z_1(k)<F_{t+2}$. If $z_2(k)=F_{e+2}$ and $m<F_{e+1}$, then $(m,m+k-z_1(k);2z_1(k))$ is of type (3). 

However, if $z_2(k)=F_{e+2}$ and $m\ge F_{e+1}$, then removing $z_1(k)$ stones yields a position of type (4) or (5). Instead, there is a winning move in the $m$ pile, to $(m-F_{e-1},(m-F_{e-1})+(F_{e-1}+k);2F_{e-1})$; since $z_1(F_{e-1}+k)\ge F_{e+3}$, this position is of type (2).

Suppose we are in a position of type (2). Then we may move in the $m+k$ pile, to $(m,m+k-a;2a)$ for $1\le a\le r$, and since $r<z_1(k)$ and hence $a<z_1(k)$, Lemma~\ref{lem:smallfibs} ensures that $z_1(k-a)\le 2a$, so $(m,m+k-a;2a)$ is of type (1). We may also move in the $m$ pile to $(m-a,(m-a)+(a+k);2a)$ for $1\le a\le\min(r,m)$, which is of type (1) since $z_1(a+k)=z_1(a)\le a\le 2a$.

Now, suppose we are in a position of type (3). Then the same arguments as for type (2) positions again shows that all moves from type (3) positions are to type (1) positions.

Now, suppose we are in a position of type (4) or (5). (We will distinguish the types more finely later.) We may move in the $m+k$ pile to $(m,m+k-a;2a)$ for $1\le a\le r$, which is of type (1). We may also move in the $m$ pile to $(m-a,(m-a)+(a+k);2a)$ for $1\le a\le\min(m,r)$. If $a\neq F_t$, then $z_1(a+k)=z_1(a)\le a\le 2a$, which is of type (1). The remaining move is to $(m-F_t,(m-F_t)+(F_t+k);2F_t)$, which is of type (4) or (5) if $m-F_t\ge F_{t+1}$ and $z_1(F_t+k)=F_{t+2}$, type (2) if $z_1(F_t+k)\ge F_{t+3}$, and type (3) if $m-F_t<F_{t+1}$ and $z_1(F_t+k) = F_{t+2}$. As a result, the only move from a position of type (4) or (5) that \emph{might} be to a $\mcP$ position is the move to $(m-F_t,(m-F_t)+(F_t+k);2F_t)$, and only if this position is of type (4) or (5). When it is to another position of type (4) or (5), then it decreases $s$ or $d$ by one, depending on whether it is a type (4) or (5) position, respectively. Hence, a position $(m,m+k;r)$ of type (4) or (5) is a $\mcP$ position if the (unique) maximal sequence of moves to positions of type (4) or (5) has even length, and is an $\mcN$ position if the (unique) maximal sequence of moves to positions of type (4) or (5) has odd length. From a position of type (4), removing consecutive Fibonacci numbers from the $m$ pile eventually results in a position of type (3), whereas from a position of type (5), removing consecutive Fibonacci numbers from the $m$ pile eventually results in a position of type (2). Either way, this distinguishes types (4a) and (4b), as well as (5a) and (5b). \end{proof}

\subsection{The word approach}
As in the case of one-pile \textsc{Fibonacci nim}, it is possible to express the $\mcP$-positions of two-pile global \textsc{Fibonacci nim} in terms of partial sums of a word. %In the following result, we use generalized letters in the Fibonacci word. These generalized letters are finite words on the same letters, and they will be defined recursively. This permits a simple generalization of Theorem~\ref{thm:alg}. Given a word on two letters, $x,y$, say $\varphi (x, y)$, we can also define $\varphi (X, Y)$, where $X$ and $Y$ are finite words (on some alphabet, not necessarily $(x, y)$), by replacing each instance of $x$ with $X$ and each instance of $y$ by $Y$. If we assign integer values to the letters, such that $\sum X_i = x$ and $\sum Y_i = y$, then an immediate consequence is that $PS(\varphi(x,y))\subseteq PS(\varphi(X,Y))$. %For the generalized Fibonacci words, we define $W^{X,Y}=XYXX\ldots$, where $X,Y$ are finite words on some alphabet.
%\urban{I was wrong about the parameter $t$, and will change back. You start with the fibonacci bound for the move size $r$ and have index $t$. I start by bounding $m$ with Fibonacci numbers. then I adjust these bounds for $m$, via $\alpha$, to bound $r$.}

Fix the smallest pile size $m\ge 0$ in a two pile game, and define $p=p(m)\ge 0$ as a function of $m$ such that $F_p\le m<F_{p+1}$. If $r < F_{p-1}$, then we say that the position $(m,m+k;r)$ is \emph{hybrid}, and otherwise it is \emph{Sturm} (in particular, the latter case applies if $p>0$ and  $r\ge F_{p-1}$). We also define any one-pile game to be Sturm. %\footnote{ Here $F_{-1}$ is not defined, and so all one pile games are Sturm, which relates back to Theorem~\ref{thm:alg}.}
Let $\alpha =\alpha(r,p)$ be the function of $r$ and $p$, defined by $F_{p+\alpha-1}\le r < F_{p+\alpha}$ (so the parameter $t$ from Theorem~\ref{thm:alg} is $t=p+\alpha-1$). For $\alpha\ge 0$, this function will classify the Sturm games that are $\mcP$-positions (via the word $w_{p+\alpha}$ or $w_{p+\alpha-1}$, as defined in Lemma~\ref{lem:partialsums}).

%\urban{change so that $\alpha$ will count the number of iterations of $T$. Well, it was originally defined to count the iteration number for the Sturm case... so I keep it. We have to live with it being negative for hybrid case. It looks funny in the statement of the main result, but it is even weirder to have Sturm case with negative alpha}
For the hybrid games, we recursively build the relevant word for classifying the $\mcP$-positions, in the following way. There are three possible transformations of a letter in a given word. In each word, each letter is one of three consecutive Fibonacci numbers, generalizing the two letter Sturm case. The Fibonacci numbers (as letters) are given recursively by successively decreasing $\alpha$ via the move dynamic parameter $r$, starting with the Sturm case of $\alpha = 0$. 

The possible transformations are
\begin{enumerate}
\item[(T1)] $F_i\rightarrow F_{i}$; ($y\rightarrow y$)
\item[(T2)] $F_i\rightarrow F_{i-1}F_{i-2}$; ($x\rightarrow yz$, or $y\rightarrow vz$)
\item[(T3)] $F_i\rightarrow F_{i-2}F_{i-3}F_{i-2}$; ($x\rightarrow vzv$)
\end{enumerate}
%\simon{Is it possible to use $x$ always for $F_i$, $y$ for $F_{i-1}$, $z$ for $F_{i-2}$, and $v$ for $F_{i-3}$? Or is the notation more subtle than that?}
The only new transformation is (T3), and it applies only if $x$ is the largest Fibonacci number present in the word, (that is only if the alphabet for the current word is $\{F_i,F_{i-1},F_{i-2}\}$). (We also list an interpretation in symbolic dynamics, generalizing Proposition~\ref{prop:fibword} (3), for later reference. For example starting with the Fibonacci word on the alphabet $\{x,y\}$, we produce a ``generalized Fibonacci word'' on the alphabet $\{y,z,v\}$, by applying (T3) and (T1): $vzvyvzvvzvyvzvyvzv\ldots$. For each transformation to follow, at most one of the two possible transformations in (T2) is used. The symbolic dynamic approach will give an abstract interpretation of the transformation $T$ below, and could be studied independently.\footnote{Note the similarity with the Tribonacci morphism $x\rightarrow xy,y\rightarrow xz,z\rightarrow x$, which satisfies $T_n=T_{n-1}T_{n-2}T_{n-3}$, with fixed point $xyxzxyxxyxzxyx\underline{y}xz\ldots$. We have indicated the first letter that differs from our type 1 word in Example~\ref{ex1}.}) Note that (T3) is the second iteration of the Fibonacci morphism, that is, the word $S_2$ in Proposition~\ref{prop:fibword}. 

Before introducing the transformation $T$, let us give three generic examples of how (T1), (T2), (T3) apply. %The first example is the most common, apart from the identity mapping (T3).
%\urban{We can remove or clean up later. For now, just for my memory...}

\begin{ex}[Type 1] The word transformation \label{ex1} $$[34, 21, 34, 34, 21, 34, 21, 34, \ldots] \rightarrow [13, 8, 13, 21, 13, 8, 13, 13, 8, 13, 21, 13, 8, 13, 21, 13, 8, 13,\ldots ],$$ appears when $m=26$ and the move dynamic parameter changes from $r=13$ to $r=12$, i.e.\ when $\alpha$ changes from 0 to $-1$. This means that we start with the Fibonacci word on the alphabet $\{34,21\}$ and apply (T1) to 21 and (T3) to 34. Note that the decrease in $r$ gives a new $\alpha$, and hence a new word.
\end{ex}

The case (T2) occurs only in very particular cases, concerning the two largest letters (the smallest letter uses only (T1)).

\begin{ex}[Type 2] \label{ex2} The word transformation $$[13, 8, 13, 8, 5, 8, 13, 8, 13, 13, 8, \ldots]\rightarrow [8, 5, 8, 5, 3, 5, 8, 5, 8, 8, 5, 8, 5, 3, 5, 8, 5, 8, \ldots],$$ occurs
when $m=26$ and $r$ changes from 5 to 4, so that $\alpha$ changes from $-2$ to $-3$. In this example, ``13'' changes to ``8,5'' (T2) if the partial sum of all lower terms belongs to $PS(w_{9})$ (or equivalently $PS(w_8)$, since we are only concerned with the letter ``13," which is the word at level $\alpha =0$ in Example~\ref{ex1}), and otherwise ``13'' changes to ``5,3,5'' (T3). For reference to the definition to come, here $x=34-26=8=F_6$, and $p=8$. \end{ex}

\begin{ex}[Type 3] \label{ex3} The word transformation $$[13, 8, 13, 21, 13, 8, 13, 13, 8, 13\ldots]\rightarrow [8, 5, 8, 13, 8, 5, 8, 8, 5, 8, 13, 8, 5, 8, 13\ldots],$$ occurs when $m=25$ and $r$ changes from 8 to 7, so that $\alpha$ changes from $-1$ to $-2$. In this example, ``13'' changes to ``8,5'' (T2) if the (partial) sum of all lower terms belongs to $PS(w_{9})$ (or $PS(w_{8})$), %\simon{index is wrong}\urban{34 is wrong? ok I will see what went wrong. Ah, it should be the index of the fibonacci number. I put 9 for now and will check again when at work $F_9=34$, $F_8=21$ and they are both ok} 
and otherwise it does not decompose (T1). For use in the definition to come, here $x=34-25=9\le F_7$, and $p=8$. \end{ex}
%\urban{...until here}

Let us describe the words for consideration. For $\alpha \ge 0$, we let $T^{-\alpha}(w_p)$ denote the word where the reverse of (T1) or (T2) has been applied to each letter in the word $T^{1-\alpha}(w_p)$, as follows. Suppose first that $\alpha = 0$, in which case the word is $T^0(w_p) = w_p = W^{F_{p+1},F_p}$, which is defined as in the one pile case. The Fibonacci morphism applies (with only one important exception) that is (T2) is applied to each instance of the larger letter and (T1) to each instance of a smaller letter (as in the one pile case). The exception is the case $\alpha = -1$, for which $T^{-1}(w_p) = T^{-2}(w_p)$, where only (the reverse of ) (T1) is used. Thus, when $\alpha\ge 0$, each word $T^{-\alpha}(w_p)$ is the Fibonacci word, and only the alphabet differs.

In a transformation to the hybrid case, i.e. $\alpha \le 0$, except for two \emph{special} cases (to be described in the next paragraph), the transformation $T$ applies (T3) to the largest Fibonacci letter in the current word, and (T1) to the other letter(s). For the special cases: define $x$ by $m = F_{p+1} - x\ge F_p$, %and $\gamma$ by $F_{\gamma -1} < x \le F_\gamma$. If, in addition, $F_{\gamma -2} \le r < F_{\gamma -1}$, then the decomposition type depends on whether $\gamma$ is even or odd. Let us remove $\gamma$, by noting that the condition for $r$ gives $\gamma - 2 = p + \alpha - 1$, that is $\gamma = p+\alpha +1$. 

A \emph{special transformation} will apply if and only if $F_{p+\alpha} < x \le F_{p+\alpha +1}$ (where $\alpha$ is defined by $F_{p+\alpha -1}\le r < F_{p+\alpha}$ as usual). There are two cases for this special transformation, depending on whether $p+\alpha$ is even or odd. Denote the partial sum of a word $W$ of all terms with index less than the $i^\text{th}$ letter by $W(i)$. 
The current word (before the transformation) is $T^{1-\alpha}(w_p)$. 

\begin{itemize}

\item Suppose first that $p+\alpha$ is odd. Consider the $i^\text{th}$ letter if and only if it is the largest letter. Then (Example~\ref{ex2} generalizes to) (T2) applies if $T^{1-\alpha}(w_p)(i)\in PS(w_p)$, and otherwise apply (T3). For the second largest and the smallest letter (T1) applies.

\item Suppose next that $p+\alpha$ is even. Consider the $i^\text{th}$ letter if and only if it is the second largest letter. Then (Example~\ref{ex3} generalizes to) (T2) applies if $T^{1-\alpha}(w_p)(i)\in PS(w_p)$, and otherwise (T1); (T3) applies to the largest letter and (T1) to the smallest letter.

\end{itemize}

Note that Example~\ref{ex1} is $T(w_p)$ for $p=8$ (which is non-Sturmian although $w_p$ is Sturmian). This type 1 transformation also applies to each purely hybrid non-special transformation.

In the case of a Sturm position $(m,m+k;r)$ (with $F_p\le m<F_{p+1}$), let $\sigma_m(\alpha)=PS(w_{p+\alpha})$ if $0\le \alpha \le 1$ and $\sigma_m(\alpha) = PS(w_{p+\alpha-1})$ if $\alpha > 1$, and otherwise, in case of a hybrid position, that is, if $\alpha<0$, we let $\sigma_m(\alpha) = PS(T^{-\alpha}(w_p))$. By this maneuver, we combine the hybrid and Sturm notations, and each set of partial sums of a word will be some $\sigma_m(\alpha)$ for some integer $\alpha = \alpha(r,p(m))$. Thus, for a fixed $m$, and variable $r$, $\sigma_p(0)$ represents the first Sturm set $PS(w_p)$, $\sigma_m(1)=\sigma_m(2)$ the second, $\sigma_m(3)$ the third, and so forth. $\sigma_m(-1)$ is the first hybrid set (Example~\ref{ex1}), and, the interesting special cases (Examples~\ref{ex2} and~\ref{ex3}) appear as $\sigma_m(-2)$, because $F_{p+\alpha-1} < F_{p+1}-m=8 \le F_{8-2} = F_{p+\alpha}$, and $\sigma_m(-1)$, because $F_{p+\alpha-1} < F_{p+1}-m=9 \le F_{8-1} = F_{p+\alpha}$, respectively. Note that, in the hybrid case, $\sigma_m(\alpha)$ depends on $x=F_{p+1}-m$, namely in case of $r<F_\xi$, where $\xi$ is defined by $F_\xi < x \le F_{\xi+1}$. Hence, in general we cannot only rely on the (more convenient) parameter $p$.

%Define $j=j(r)$, as a function of $r$, by $F_{p-j-1}\le r < F_{p-j}$. If $(m,m+k;r)$ is hybrid, let $x_0=F_{p-j+1}$, $y_0=F_{p-j}$, and $x_{-1}=F_{p-j-1}$. 
%Define $u$ by $F_u\le m + F_p <F_{u+1}$. If $u$ is \emph{even}, then let $$(x_1,y_1)=(x_0y_0x_0,y_0x_{-1}y_0)=([F_{p-j+1},F_{p-j},F_{p-j+1}],[F_{p-j},F_{p-j-1},F_{p-j}]),$$ whereas we let $$(x_1,y_1)=(y_0x_{-1}y_0, x_0)=([F_{p-j},F_{p-j-1},F_{p-j}],[F_{p-j+1}]),$$ when $u$ is odd. 

% For $1\le i < j$, let $(x_{i+1}, y_{i+1})=(x_i y_ix_i,y_ix_{i-1}y_i)$. The word $W^{x_j,y_j}$ characterizes the $\mcP$-positions in the hybrid case.
% \urban{compare}
% \begin{thm}%\label{thm:alg2}
% A Sturm position $(m,m+k;r)\in\mcP$ if and only if $k\in PS(w_a)$, where $a=p+\alpha$ if $(F_{p-1} \le)\, r<F_{p+1}$, and otherwise $a=p+\alpha +1$. In the case of a hybrid game, then $(m,m+k;r)\in\mcP$ if and only if $k\in PS(T^{-\alpha}(w_p))$.
% \end{thm}
The smallest letter in the alphabet of a given word has a similar importance to the proof of Theorem~\ref{thm:alg2} as for that of Theorem~\ref{thm:alg}. 
\begin{lem}
Consider the hybrid case with $\alpha > -p+2$. If $\alpha = -p +3<0$, then the smallest letter is $F_2=1$; that is, the smallest letter in $T^{p-3}(w_p)$, $p>3$ is $F_2$. For $\alpha < 0$, the smallest letter in $T^{p-3-j}(w_p)$ is $F_{2+j}$. That is, for $\alpha <0$, the smallest letter in $T^{-\alpha}(w_p)$ is $F_{p + \alpha - 1}$. For the Sturm case, if $0\le \alpha\le 1$, then the smallest letter is $F_{p + \alpha }$, and otherwise it is $F_{p + \alpha -1}$. The largest letter is two larger than the smallest in case of hybrid and one larger than the smallest in case of Sturm.
\end{lem}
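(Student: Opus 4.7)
The plan is to argue by induction on the absolute position of $\alpha$, using the Sturm cases as the base and tracking how the transformation $T$ shifts the alphabet one Fibonacci index downward at each hybrid step. The Sturm claims are immediate from the definition $\sigma_m(\alpha)=PS(w_a)$ (with $a=p+\alpha$ for $0\le\alpha\le 1$ and $a=p+\alpha-1$ for $\alpha>1$), since $w_a = W^{F_{a+1},F_a}$ lives on the two-letter alphabet $\{F_a,F_{a+1}\}$, whose smallest and largest letters differ by exactly one Fibonacci index. In particular, this verifies the base case $\alpha=0$, where $T^0(w_p)=w_p$ uses $\{F_p,F_{p+1}\}$.

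For the hybrid regime I would induct on $-\alpha\ge 1$. The induction hypothesis is that the previous-level word (one fewer application of $T$) has alphabet $\{F_p,F_{p+1}\}$ when coming directly from the Sturm boundary, and $\{F_{p+\alpha},F_{p+\alpha+1},F_{p+\alpha+2}\}$ in all deeper hybrid steps. The generic hybrid transformation applies (T3) to the largest letter and (T1) to the remaining letters: since $F_{p+\alpha+2}\mapsto F_{p+\alpha}F_{p+\alpha-1}F_{p+\alpha}$ under (T3), the new smallest letter is $F_{p+\alpha-1}$, and since (T1) preserves the middle letter $F_{p+\alpha+1}$, it becomes the new largest. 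The new alphabet is therefore $\{F_{p+\alpha-1},F_{p+\alpha},F_{p+\alpha+1}\}$, with smallest $F_{p+\alpha-1}$ and largest $F_{p+\alpha+1}$ exactly two indices apart, as claimed. Iterating this $p-3$ times from $\alpha=0$ reaches the boundary $\alpha=-p+3$, where the smallest letter is $F_2=1$.

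The main obstacle I anticipate is handling the two \emph{special} transformations, in which (T2) replaces (T3) on selected instances of the largest letter (the odd $p+\alpha$ case) or (T2) replaces (T1) on selected instances of the second-largest letter (the even $p+\alpha$ case). Since (T2) sends $F_i$ to $F_{i-1}F_{i-2}$, the letters it produces already lie inside the expected alphabet $\{F_{p+\alpha-1},F_{p+\alpha},F_{p+\alpha+1}\}$, so no letters outside this range can appear. What requires care is showing that the extreme letters are actually realized: at least one instance of (T3) must still be applied, so that $F_{p+\alpha-1}$ appears as the new smallest, and in the even case at least one instance of (T1) must remain on the second-largest letter, so that $F_{p+\alpha+1}$ remains as the new largest. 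Both facts amount to verifying that the selection condition $T^{1-\alpha}(w_p)(i)\in PS(w_p)$ is neither trivially true nor trivially false along the relevant positions; this should follow from the Sturmian density properties of the Fibonacci word combined with Lemma~\ref{lem:partialsums}, together with the observation that $PS(w_p)$ strictly contains $PS(w_{p+1})$, so the selection genuinely bifurcates.
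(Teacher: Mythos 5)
Your proposal is correct and follows essentially the same route as the paper, which simply states that the lemma ``follows from the definition of the word $T^{-\alpha}(w_p)$''; your induction on the number of applications of $T$ is just that definition unwound explicitly. The one point you flag as needing care --- that under the special transformations at least one instance of (T3), respectively (T1), survives so that the extreme letters of the new alphabet are actually realized --- is a genuine subtlety the paper's one-line proof passes over silently, and your sketch of how to settle it (the selection condition $T^{1-\alpha}(w_p)(i)\in PS(w_p)$ genuinely bifurcates) is the right thing to check.
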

\begin{proof}
This follows from the definition of the word $T^\alpha(w_p)$.
\end{proof}
That is, the smallest letter is $F_{p + \alpha - 1}$, except for the cases $0\le \alpha \le 1$, when it is $F_{p+\alpha}$ (independently of Sturm or hybrid). 

\begin{thm}\label{thm:alg2}
The position $(m,m+k;r)\in\mcP$ if and only if $k\in \sigma_m(\alpha)$.
\end{thm}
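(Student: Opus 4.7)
The plan is to mirror the proof of Theorem~\ref{thm:alg} for the one-pile case, but handle the extra wrinkle that there are now two kinds of moves: moves in the larger pile, which change only $k$ and $r$, and moves in the smaller pile, which change $m$, $k$ and hence the whole triple $(p,\alpha,T^{-\alpha}(w_p))$. I would proceed by induction on $2m+k$ (the total number of stones), and at each stage show separately (i) that from $k\in\sigma_m(\alpha)$ every move leads to a position with $k'\notin\sigma_{m'}(\alpha')$, and (ii) that from $k\notin\sigma_m(\alpha)$ there is an explicit move reaching the correct $\sigma$.

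The first preliminary step is to extend Lemmas~\ref{lem:partialsums} and~\ref{lem:lastPS} to the hybrid words $T^{-\alpha}(w_p)$. Each of the transformations (T1), (T2), (T3) replaces a letter $F_i$ by a block of smaller Fibonacci letters whose sum is $F_i$; consequently the partial sums at the block boundaries of $T^{-\alpha-1}(w_p)$ are exactly $\sigma_m(\alpha)=PS(T^{-\alpha}(w_p))$. This yields the nesting $\sigma_m(\alpha+1)\subseteq\sigma_m(\alpha)$ and an analogue of Lemma~\ref{lem:lastPS}: if $k\in\sigma_m(\alpha)\setminus\sigma_m(\alpha+1)$ then $k$ minus the trailing letter of the last block lies in $\sigma_m(\alpha+2)$ (with the appropriate analogue when the trailing block is of (T3) shape $F_{i-2}F_{i-3}F_{i-2}$). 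Getting this right across the special (T2) cases of Examples~\ref{ex2} and~\ref{ex3}, where the choice between (T2) and (T3) is governed by membership in $PS(w_p)$, is where the bookkeeping is thickest.

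For direction (i), fix $k\in\sigma_m(\alpha)$ and consider an arbitrary move. A move of size $s\le r$ in the larger pile produces $(m,m+k-s;2s)$; the new index $\alpha'$ satisfies $\alpha'\le\alpha+2$ because $2s<2F_{p+\alpha}<F_{p+\alpha+2}$, and one checks $k-s\notin\sigma_m(\alpha')$ by the same argument as in Theorem~\ref{thm:alg}, using the extended partial-sum lemmas above and reversing (T1), (T2), (T3) at most twice. A move of size $s$ in the smaller pile yields $(m-s,(m-s)+(k+s);2s)$; here the new minimal-pile parameter $p'=p(m-s)$ can change, but for $s$ not equal to the smallest letter of $T^{-\alpha}(w_p)$ the analysis again collapses to the one-pile argument. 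The delicate case is $s$ equal to that smallest letter (the analogue of the $s=F_t$ analysis in Theorem~\ref{thm:twopiles}), where one must show that the resulting position either stays in the same "level" with $\alpha,s$ decreased, or lands cleanly outside $\sigma_{m-s}(\alpha')$.

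For direction (ii), assume $k\notin\sigma_m(\alpha)$ and let $\beta>\alpha$ be the unique index with $k\in\sigma_m(\beta)\setminus\sigma_m(\beta+1)$. The Lemma~\ref{lem:lastPS} analogue produces a move by the "last letter" $F_q$ (or the terminal two letters, in a (T3) block) which drops $k$ into $\sigma_m(\beta+2)$, and a short check confirms $2F_q$ falls between $F_{p+\beta+1}$ and $F_{p+\beta+2}$ so that the resulting position is indeed in the target $\sigma$. If this move requires removing more stones than are in the $m+k$ pile, one instead performs the matched move in the smaller pile, using that the partial-sum identity $F_i=F_{i-1}+F_{i-2}$ makes the two moves equivalent modulo relabelling $(m,m+k)\leftrightarrow(m-s,(m-s)+(k+s))$. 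The main obstacle throughout is not the inductive skeleton, which is standard, but the careful verification that the special-case transformations governed by $PS(w_p)$-membership interact correctly with moves in the smaller pile; indeed, reconciling this partial-sum formulation with the five-case Zeckendorf statement of Theorem~\ref{thm:twopiles} effectively proves the theorem, and I would carry out that reconciliation as the final step.
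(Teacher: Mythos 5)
Your plan follows essentially the same route the paper itself takes: the paper does not publish a detailed proof of Theorem~\ref{thm:alg2}, stating only that it is ``similar to that of Theorem~\ref{thm:alg}'' with more tedious details, and its intended skeleton is exactly yours --- verify that no claimed $\mcP$-position has a move to a claimed $\mcP$-position and that every claimed $\mcN$-position has one, using nesting and last-letter lemmas for the generalized words $T^{-\alpha}(w_p)$. Your identification of where the work concentrates (extending Lemmas~\ref{lem:partialsums} and~\ref{lem:lastPS} across the special (T2)/(T3) transformations governed by $PS(w_p)$-membership, and the move by the smallest letter in the $m$-pile, paralleling the $s=F_t$ analysis of Theorem~\ref{thm:twopiles}) matches the paper's emphasis, and closing the argument by reconciling the word description with the fully proved Zeckendorf classification of Theorem~\ref{thm:twopiles} is a legitimate strategy.

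One concrete omission: you describe a move in the larger pile as ``changing only $k$ and $r$,'' but this fails when the move removes $s>k$ stones, for then $m+k-s<m$ becomes the \emph{smaller} pile, the governing parameters become $p(m+k-s)$ and the word built from that pile rather than from $m$, and the quantity to test for membership is $s-k$, not $k-s$. The paper's (suppressed) outline lists this as a third option type, separate from moves in the smaller pile and from moves in the larger pile that preserve the ordering, and it needs its own verification because both $p$ and $\alpha$ jump discontinuously there. Beyond that, your proposal sits at roughly the same level of completeness as the paper's own treatment: a credible plan whose correctness rests on the unperformed bookkeeping you explicitly acknowledge.
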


The proof is similar to that of Theorem~\ref{thm:alg}. There are more tedious details to check, but all the ideas and techniques are similar.

\section{Multi-pile \textsc{Fibonacci nim}} \label{sec:multipile}

The following theorem about (ordinary) \textsc{nim} is well-known:

\begin{thm}[Bouton] If $(n_1,\ldots,n_k)$ is a \textsc{nim} position, then there is a unique nonnegative integer $b$ for which $(n_1,\ldots,n_k,b)\in\mcP$. Furthermore, $b<2\max(n_1,\ldots,n_k)$ and $b\le n_1+\cdots+n_k$. \end{thm}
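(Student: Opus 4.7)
The plan is to invoke the classical characterization of $\mcP$-positions in \textsc{nim}, namely that $(n_1,\ldots,n_k)\in\mcP$ if and only if the bitwise XOR (nim-sum) satisfies $n_1\oplus n_2\oplus\cdots\oplus n_k=0$. Given this, the claim is essentially an algebraic exercise. Set $b = n_1\oplus\cdots\oplus n_k$. Then $(n_1,\ldots,n_k,b)\in\mcP$ because $n_1\oplus\cdots\oplus n_k\oplus b=0$, and uniqueness follows immediately from the invertibility of $\oplus$: any other $b'$ would satisfy $b'=n_1\oplus\cdots\oplus n_k=b$.

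For the first inequality, let $M=\max(n_1,\ldots,n_k)$ and let $j$ be the index of the highest set bit of $M$, so that $2^j\le M<2^{j+1}$. Each $n_i$ has binary digits only at positions $0,1,\ldots,j$, and hence so does $b=n_1\oplus\cdots\oplus n_k$. Therefore $b\le 2^{j+1}-1<2\cdot 2^j\le 2M$, giving $b<2\max(n_1,\ldots,n_k)$.

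For the second inequality, I would use the elementary fact that for any two nonnegative integers $a,c$, one has $a\oplus c\le a+c$, which follows by comparing binary columns: the XOR digit never exceeds the column sum, and the XOR moreover lacks the carry contributions. Iterating this fact, $b=n_1\oplus\cdots\oplus n_k\le n_1+\cdots+n_k$ by a straightforward induction on $k$.

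The main ``obstacle'' here is really just citing Bouton's theorem correctly; once the nim-sum description of $\mcP$ is in hand, every remaining step is a short manipulation of binary representations, with no combinatorial subtlety. In the write-up one should simply reference Bouton's original work (or a standard text such as~\cite{ANW07}) for the characterization of $\mcP$-positions, and then verify the two inequalities as above.
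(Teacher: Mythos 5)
Your proof is correct and follows exactly the route the paper itself indicates: the paper gives no written proof of this theorem, only a remark that it ``follows easily from Bouton's description of the winning strategy'' (i.e.\ from the nim-sum characterization of $\mcP$-positions), which is precisely what you carry out by taking $b=n_1\oplus\cdots\oplus n_k$ and verifying uniqueness and the two bounds by binary-digit arguments. (The one caveat is a defect of the statement rather than of your argument: in the degenerate case $n_1=\cdots=n_k=0$ there is no highest set bit, $b=0$, and the bound $b<2\max(n_1,\ldots,n_k)$ fails.)
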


\begin{rem} This follows easily from Bouton's description of the winning strategy in \textsc{nim}, but it is also possible to prove it (say, by induction on the largest power of 2 occurring in any $n_i$) without using the winning strategy. \end{rem}

It would be desirable to have a similar statement for multi-pile \textsc{Fibonacci nim}. However, the best we can do is the following:

\begin{thm} \label{thm:comps} If $(n_1,\ldots,n_k;\infty)$ is a multi-pile \textsc{Fibonacci nim} position, then there is at most one nonnegative integer $b$ for which $(n_1,\ldots,n_k,b;\infty)\in\mcP$. When it exists, we call $b$ the \emph{complementary value} of $(n_1,\ldots,n_k)$. \end{thm}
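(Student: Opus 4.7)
The plan is to prove uniqueness by contradiction. Suppose for the sake of contradiction that there exist two distinct nonnegative integers $b_1 < b_2$ such that both $(n_1,\ldots,n_k,b_1;\infty)$ and $(n_1,\ldots,n_k,b_2;\infty)$ are $\mcP$ positions. I will derive a contradiction by using one of these $\mcP$ positions to exhibit a winning move from the other.

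The key observation is the interaction between the parameter $\infty$ and what happens after one move. From the position $(n_1,\ldots,n_k,b_2;\infty)$ we may legally remove $b_2 - b_1 \ge 1$ stones from the last pile (this is permitted since $\infty$ imposes no bound on the first move). The resulting position is $(n_1,\ldots,n_k,b_1; 2(b_2-b_1))$. Since $(n_1,\ldots,n_k,b_2;\infty) \in \mcP$ by assumption, Proposition~\ref{prop:partition} forces $(n_1,\ldots,n_k,b_1; 2(b_2-b_1)) \in \mcN$, so it has a winning move to some position $G' \in \mcP$. This winning move removes some number $a \le 2(b_2-b_1)$ of stones from one of the piles of $(n_1,\ldots,n_k,b_1)$, and $G'$ then has take-away parameter $2a$.

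Now I compare this with the position $(n_1,\ldots,n_k,b_1;\infty)$. The pile configuration is identical, and the only difference from $(n_1,\ldots,n_k,b_1; 2(b_2-b_1))$ is the take-away parameter, which is less restrictive in the $\infty$ version. Hence the same move of removing $a$ stones from the same pile is also legal from $(n_1,\ldots,n_k,b_1;\infty)$, and it leads to the identical position $G'$ (with parameter $2a$, independent of the old parameter). This shows that $(n_1,\ldots,n_k,b_1;\infty)$ has a move into $\mcP$, so by Proposition~\ref{prop:partition} it lies in $\mcN$, contradicting our assumption.

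I do not expect a substantial obstacle; the only subtlety to handle carefully is the bookkeeping that shows the move witnessing $(n_1,\ldots,n_k,b_1;2(b_2-b_1)) \in \mcN$ is genuinely available from $(n_1,\ldots,n_k,b_1;\infty)$ and produces the \emph{same} follow-up position, including the same resulting parameter $2a$. Once that is observed, the conclusion is immediate, and uniqueness of $b$ follows.
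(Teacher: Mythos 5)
Your proof is correct and is essentially the paper's own argument: both hinge on moving from the position with the larger extra pile $b_2$ down to $(n_1,\ldots,n_k,b_1;\cdot)$ and observing that the options of this intermediate position (including their resulting take-away parameters, which depend only on the number of stones just removed) form a subset of the options of $(n_1,\ldots,n_k,b_1;\infty)$. The paper states this directly (the intermediate position inherits $\mcP$ status, so the position with the larger pile is in $\mcN$), whereas you run the contrapositive as a proof by contradiction, but the content is identical.
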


\begin{proof} Suppose that $(n_1,\ldots,n_k,b;\infty)\in\mcP$, and suppose that $b'>b$. Then there is a move from $(n_1,\ldots,n_k,b';\infty)$ to $(n_1,\ldots,n_k,b;b'-b)$. But the latter is a $\mcP$ position, since its options are a subset of those of $(n_1,\ldots,n_k,b;\infty)$, which is itself a $\mcP$ position. Hence $(n_1,\ldots,n_k,b';\infty)\in\mcN$. \end{proof}

\begin{rem} It remains an open question to determine a bound on the complementary value, when it exists. It appears that most of the time, the complementary value is ``not too much larger'' than the maximum of the $n_i$'s. However, there are some notable exceptions; for instance: \begin{itemize} \item $(1,47,72;\infty)\in\mcP$, \item $(2,41,139;\infty)\in\mcP$, \item $(2,93,345;\infty)\in\mcP$, \item $(8,9,53;\infty)\in\mcP$. \end{itemize} See Table~\ref{table:comps} for a table of values of complementary values. \end{rem}

\begin{table} \begin{tabular}{c|cccccccccccccccc} & 0 & 1 & 2 & 3 & 4 & 5 & 6 & 7 & 8 & 9 & 10 & 11 & 12 & 13 & 14 & 15 \\ \hline 0 & 0 & 1 & 2 & 3 & 4 & 5 & 6 & 7 & 8 & 9 & 10 & 11 & 12 & 13 & 14 & 15 \\ 1 & 1 & 0 & 4 & 6 & 2 & 9 & 3 & 11 & 12 & 5 & 14 & 7 & 8 & 17 & 10 & 16 \\ 2 & 2 & 4 & 0 & 7 & 1 & 10 & 11 & 3 & 19 & 15 & 5 & 6 & 14 & 24 & 12 & 9 \\ 3 & 3 & 6 & 7 & 0 & \fbox{$\infty$} & 11 & 1 & 2 & 16 & 12 & 13 & 5 & 9 & 10 & 17 & 18 \\ 4 & 4 & 2 & 1 & \fbox{$\infty$} & 0 & 7 & 10 & 5 & 17 & 16 & 6 & 18 & 13 & 12 & 19 & 69 \\ 5 & 5 & 9 & 10 & 11 & 7 & 0 & 35 & 4 & 15 & 1 & 2 & 3 & 18 & 22 & 23 & 8 \\ 6 & 6 & 3 & 11 & 1 & 10 & 35 & 0 & 8 & 7 & 17 & 4 & 2 & 16 & 14 & 13 & 26 \\ 7 & 7 & 11 & 3 & 2 & 5 & 4 & 8 & 0 & 6 & 13 & 27 & 1 & 15 & 9 & 22 & 12 \\ 8 & 8 & 12 & 19 & 16 & 17 & 15 & 7 & 6 & 0 & 53 & 11 & 10 & 1 & 57 & 35 & 5 \\ 9 & 9 & 5 & 15 & 12 & 16 & 1 & 17 & 13 & 53 & 0 & 21 & 27 & 3 & 7 & 76 & 2 \\ 10 & 10 & 14 & 5 & 13 & 6 & 2 & 4 & 27 & 11 & 21 & 0 & 8 & 26 & 3 & 1 & 24 \\ 11 & 11 & 7 & 6 & 5 & 18 & 3 & 2 & 1 & 10 & 27 & 8 & 0 & 22 & 21 & 64 & 88 \\ 12 & 12 & 8 & 14 & 9 & 13 & 18 & 16 & 15 & 1 & 3 & 26 & 22 & 0 & 4 & 2 & 7 \\ 13 & 13 & 17 & 24 & 10 & 12 & 22 & 14 & 9 & 57 & 7 & 3 & 21 & 4 & 0 & 6 & 20 \\ 14 & 14 & 10 & 12 & 17 & 19 & 23 & 13 & 22 & 35 & 76 & 1 & 64 & 2 & 6 & 0 & 21 \\ 15 & 15 & 16 & 9 & 18 & 69 & 8 & 26 & 12 & 5 & 2 & 24 & 88 & 7 & 20 & 21 & 0 \end{tabular} \caption{Complementary values of \textsc{Fibonacci nim}. The boxed $\infty$'s mean that there is no complementary value for these positions.} \label{table:comps} \end{table}

A curious aspect of Theorem~\ref{thm:comps} is the possibility that there may not be a complementary value for a \textsc{Fibonacci nim} position. It turns out that \textsc{Fibonacci nim} positions with no complementary values \emph{do} exist.

\begin{thm} For any nonnegative integer $n$, $(3,4,n;\infty)\in\mcN$. \end{thm}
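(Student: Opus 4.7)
The plan is to exhibit, for each $n\ge 0$, an explicit opening move from $(3,4,n;\infty)$ to a $\mcP$-position. Because $r=\infty$ permits the first player to take any positive number of stones from any pile, there is a rich pool of candidate moves, and I would split into cases on $n$ (organised by its Zeckendorf representation) and specify the winning move in each case.

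My first attempt would exploit Theorem~\ref{thm:twopiles} by reducing to a two-pile game. Removing all three stones from the 3-pile reaches $(4,n;6)$; here $t=5$, and Theorem~\ref{thm:twopiles} says this is in $\mcP$ exactly when $n=4$ or $z_1(n-4)\ge 8$. Likewise, removing all four stones from the 4-pile reaches $(3,n;8)$, where $t=6$; this is in $\mcP$ exactly when $n=3$ or $z_1(n-3)\ge 13$. Together these two strategies already cover every $n$ whose Zeckendorf representation begins with a sufficiently large Fibonacci number.

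For the remaining $n$, I would aim at a three-pile $\mcP$-position. The key leverage is that after a small first move (taking $s\le 4$ from the 3- or 4-pile) the new parameter $r=2s$ is at most $8$, so the induced game tree is shallow and can be handled by direct analysis. I would therefore build a short library of three-pile $\mcP$-targets --- for instance $(3,3;2)$, $(3,3,n';2)$ for appropriate $n'$, $(1,2,4;2)$, $(1,2,4;4)$, $(2,2,5;2)$, and $(2,4,6;2)$ --- and for each uncovered $n$ specify a first move landing on one of them. For example, $n=0$ goes to $(3,3;2)$ by taking $1$ from the 4-pile; $n=1$ to $(1,2,4;2)$ by taking $1$ from the 3-pile; $n=2$ to $(1,2,4;4)$ by taking $2$ from the 3-pile; $n=5$ to $(3,3,5;2)$ by taking $1$ from the 4-pile; and $n=6$ to $(2,4,6;2)$ by taking $1$ from the 3-pile.

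The hardest part is the organisation of the case analysis itself, since there is no uniform winning move in $n$ and verification of three-pile $\mcP$-positions does not reduce directly to Theorem~\ref{thm:twopiles}. I would proceed by strong induction on $n$, using the induction hypothesis to certify the $\mcN$-status of the smaller positions that appear as opponent replies from the candidate $\mcP$-targets, and partitioning $n$ by its Zeckendorf data so that every $n$ falls into exactly one case handled by a member of the finite list of opening moves.
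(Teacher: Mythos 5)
Your overall strategy is the one the paper uses: for each $n$ exhibit a small opening move to a three-pile candidate $\mcP$-position, fall back on a two-pile reduction when a whole pile can be removed, and close the argument by induction against a library of small-pile $\mcP$-families. Your sample moves ($n=0,1,2,5,6$) and your two-pile computations (that $(0,4,n;6)\in\mcP$ iff $z_1(n-4)\ge 8$) all agree with the paper's classification. But the proposal stops exactly where the content of the proof begins. The essential fact is that the nonnegative integers partition into \emph{four} explicit classes, $B-2=PS(w_3)$, $AB-2=1+PS(w_4)$, $AB-1=2+PS(w_4)$, and $BB-1=4+PS(w_5)$, and that a \emph{single uniform} opening move works on each class: take $1$ from the $4$-pile (reaching $(3,3,n;2)$), or take $1$, $2$, or $3$ from the $3$-pile (reaching $(2,4,n;2)$, $(1,4,n;4)$, $(0,4,n;6)$ respectively). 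You gesture at ``partitioning $n$ by its Zeckendorf data'' but never produce the partition, so the case analysis is not actually specified; your isolated targets such as $(2,2,5;2)$ and $(2,4,6;2)$ are just single instances of these infinite families.

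There are two further points where the plan as written would fail. First, the claim that after a first move with $r=2s\le 8$ ``the induced game tree is shallow and can be handled by direct analysis'' is false for the families with unbounded third pile: from $(3,3,n;2)$ the players can trade single stones indefinitely, so the depth grows with $n$ and a genuinely recursive argument is unavoidable. Second, your induction is framed on the wrong statement. The opponent's replies from, say, $(3,3,n;2)$ are positions like $(3,3,n-1;2)$, $(2,3,n;2)$, $(1,3,n;4)$ --- none of the form $(3,4,n';\infty)$ --- so the inductive hypothesis ``$(3,4,n';\infty)\in\mcN$ for $n'<n$'' certifies nothing about them. You must instead run a simultaneous induction over the whole collection of auxiliary families (the paper needs roughly a dozen, e.g.\ $(0,1,z;2)$ for $z\in B-1$, $(1,1,z;2)$ for $z\in B-2$, $(1,2,z;2)$ for $z\in BB-1$, $(1,3,z;2)$ for $z\in AB-2$, $(2,2,z;2)$ for $z\in B-2$, $(2,3,z;2)$ for $z\in AB-1$, \dots), verifying for each that every option reverts to a member of the collection. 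Identifying that closed collection, together with the four-class partition, \emph{is} the proof; without it the proposal is a plausible plan rather than an argument.
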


%\simon{It appears that if $(a,b,c;\infty)\in\mcP$, then $(a,b,c+k;r)$ is ``almost'' in $\mcP$ iff $(k;r)$ is. So, it almost breaks up as a sum, but not quite. Can this idea be made precise and help us prove the theorem more easily, streamlining the cases?}

It turns out that many small facts have to be verified in this proof. Since it is tedious to check the details, we only explain the general ideas.

\begin{proof} The following four classes partition the nonnegative integers. 
% \begin{enumerate}
% \item $B-2 = \sum W^{3,2}=0,3,5,8,11,13,16,\ldots $ 
% \item $AB-2 = 1 + \sum W^{5,3}=1,6,9,14,19,22,\ldots $ 
% \item $AB - 1 = 2 + \sum W^{5,3}=2,7,10,15,20,23,\ldots $ 
% \item $BB-1 = 4 + \sum W^{8,5}=4,12,17,25,\ldots $ 
% \end{enumerate}
\begin{enumerate}
\item $B-2=PS(w_3)=\{n:z_1(n)\ge 3\}=\{0,3,5,8,11,13,16,18,21,\ldots\}$,
\item $AB-2=1+PS(w_4)=\{n:z_1(n-1)\ge 5\}=\{1,6,9,14,19,22,27,\ldots\}$,
\item $AB-1=2+PS(w_4)=\{n:z_1(n-2)\ge 5\}=\{2,7,10,15,20,23,28,\ldots\}$,
\item $BB-1=4+PS(w_5)=\{n:z_1(n-4)\ge 8\}=\{4,12,17,25,33,38,\ldots\}$.
\end{enumerate}
(Recall that $F_3=2$, so that $PS(w_3)$ consists of partial sums with letters 3 and 2, and so forth.)

%\simon{We can also interpret these partitions in terms of Zeckendorf: part (1) means $z_1(n)\ge 3$, part (2) means $z_1(n-1)\ge 5$, part (3) means $z_1(n-2)\ge 5$, and part (4) means that $z_1(n-4)\ge 8$. It is routine to check that these sets actually partition $\NN$. Also, we should explain the naming convention for the partitions, just because it is interesting.}\urban{of course, will do this Friday I hope}

\begin{rem} The names for these sets come from the theory of complementary equations. We let $a(n)=\lfloor\phi n\rfloor$ and $b(n)=\lfloor \phi^2 n\rfloor$, where $\phi=\frac{1+\sqrt{5}}{2}$. Then $A$ consists of all numbers of the form $a(n)$ for some $n\ge 1$, $B$ consists of all numbers of the form $b(n)$ for some $n$, $AB$ consists of all numbers of the form $a(b(n))$ for some $n$, and so forth. See~\cite{Kim08} for more details. \end{rem}

Adding one to each set gives the sets, $B-1 = AA$, $AB-1 = BA$, $AB$ and $BB$. The sets $AA$ and $AB$ partition $A$ and the sets $BA$ and $BB$ partition $B$. $A$ and $B$ partition the positive integers.

%We can also interpret these partitions in terms of Zeckendorf: part (1) means $z_1(n)\ge 3$, part (2) means $z_1(n-1)\ge 5$, part (3) means $z_1(n-2)\ge 5$, and part (4) means that $z_1(n-4)\ge 8$. It is routine to check that these sets actually partition $\NN$.

We claim that the following moves are to $\mcP$ positions:
\begin{enumerate}
\item If $n\in B-2$, then $(3,3,n;2)\in\mcP$.
\item If $n\in AB-2$, then $(2,4,n;2)\in\mcP$.
\item If $n\in AB-1$, then $(1,4,n;4)\in\mcP$.
\item If $n\in BB-1$, then $(0,4,n;6)\in\mcP$.
\end{enumerate}

We give a proof in the spirit of the proofs of Theorems~\ref{thm:alg} and~\ref{thm:alg2}, although it is also possible to give a proof in terms of the Zeckendorf representation. %We use an analogue to the previous Sturm approach. But, we have to be careful, because it does not generalize easily. In a sense, it appears, it generalizes ``below' the candidate $\mcP$ positions, but not further. By below, we mean that also the move-size restriction has to be bounded away. For example the position $(1,1,21;20)\in \mcP$, but $(1,1,18;6)\in \mcN$ and $(1,1,16;4)\in \mcN$  (but each $16, 18, 21\in B-2$), but this will be irrelevant, because we will only use that $(1,1,z;2)\in \mcP$ if $z\in (B-2)$. Thus, the losing player will not find a contradictory move on an arbitrarily large third pile size. Because of the partitioning, combined with the small pile sizes, the winning player can always respond with a small move, that pertains the Sturm properties of the $\mcP$ positions. (This proof is very well suited for a computer proof, but we have not developed this tool yet. Perhaps methods from J. Shallit et al in ``Automatic Theorem-Proving in Combinatorics on Words" could be used?)

Part (4) of the claim follows from Theorem~\ref{thm:alg2}. Let us list the moves for the respective three first types (enumerating as above, and with $n$ belonging to respective subclass):
\begin{enumerate}
\item $(3,3,n-x;2x), 1\le x\le 2$, $(2,3,n;2)$, $(1,3,n;4)$ (we may assume $n>0$).
\item $(2,4,n-x;2x), 1\le x\le 2$, $(2,3,n;2)$, $(2,2,n;4)$, $(1,4,n;2)$, $(0,4,n;4)$.
\item $(1,4,n-x;2x), 1\le x\le 4$, $(0,4,n;2)$, $(1,3,n;2)$, $(1,2,n;4)$, $(1,1,n;6)$, $(0,1,n;8)$.
\end{enumerate}
We must show that all of these positions are in $\mcN$. To do this, we show that all of the following positions are in $\mcP$:
 \begin{itemize}
\item $(0,1,z;2)\in \mcP$ if $z\in B-1$, and $(0,1,z;6)\in \mcP$ if $z\in BB-3\subset AB-2\subset B-1$.
\item $(0,2,z;4)\in \mcP$ if $z\in AB-1$.
\item $(0,3,z;2)\in \mcP$ if $z\in AB=\{3,8,11,16,21, 24\ldots\}\subset B-2$.
 \item $(1,1,z;2)\in \mcP$ if $z\in B-2$ and $(1,1,z;4)\in \mcP$ if $z\in BB\subset B-2$. 
 \item $(1,2,z; 2)\in \mcP$ if $z\in BB-1$.
 \item $(1,3,z; 2)\in \mcP$ if $z\in AB-2$.
\item $(2,2,z;2)\in \mcP$ if $z\in B-2$ and $(2,2,z;4)\in \mcP$ if $z\in BB\subset B-2$.
 \item $(2,3,z;2)\in \mcP$ if $z\in AB-1$.
 \end{itemize}

We verify that each candidate $\mcN$ position in (1) above has a move to a candidate $\mcP$ position. Since $2<n\in B-2$, the positions of the form $(3,3,n-x;2x)$ can immediately be reverted to a position of the same type. This follows from the proof of Theorem~\ref{thm:alg}, using the letters $(3,2)$. From $(2,3,n;2)$ we can move to the candidate $\mcP$ position $(2,2,n;2)$, and from $(1,3,n;4)$ we can move either to $(1,3,n-2;4)$ (with $n\in B-2\setminus AB=\{5,13,18,\ldots\}$, which implies $n-2\in B-2$), or to $(0,3,n;2)$ (with $n\in AB$). 
 
Next, we verify that each candidate $\mcN$ position in (2) above has a move to a candidate $\mcP$ position. Here $0 < n\in AB-2$ and the letters are $(5,3)$. For positions of the form $(2,4,n-x;2x)$, all but one case can be reversed to a $\mcP$ position of the same type. This is the case where the current letter of the Fibonacci word with letters $3,5$ is 5, but $x=1$. In this case, $n-1\in BB$, so that $(2,2,n-1;4)$ is a $\mcP$ candidate. (In fact, we have that $BB\subset AB-3\subset B-2$.) For the remaining three proper three-pile cases, we have just seen that $(2,2,n-1;4)$ is a $\mcP$ candidate; it remains to note that both $(2,3,n;2)$ and $(1,4,n;2)$ have options to the candidate $\mcP$ position $(1,3,n;2)$.

For case (3), concerning $n\in AB-1$, the first type has a reversible option unless $x=1$ and the current letter is 5. In any case, the option $(1,3,n-1;2)$ is a candidate $\mcP$ position, and this also suffices for $(1,3,n;4)$. There are two more proper 3-pile candidate $\mcN$ positions of this form: $(1,2,n;4)$ reverses to $(0,2,n-1;2)$ which is a $\mcP$ candidate, and from $(1,1,n;6)$, there is an option $(1,1,z;2(n-z))$, with $z \in B-2$, because the move dynamic is 6. Hence, we have found $\mcP$ candidates for all the $\mcN$ candidates of the forms in (1)--(3).

Next, we must show that for all the lower level $\mcP$ candidates, for all $\mcN$ candidate options, there is a reversible move to an $\mcP$ candidate. Let us begin to show that $(1,1,z;2)$ is reversible. The two-stone removal is reversible, by the above argument, and the move to $(1,1,z-1;2)$ reverts to a position of the same form, unless the current letter is ``3.'' In this case there is a response  to $(0,1,z-2;2)$, and where $z-2\in AB-2$, which is a $\mcP$ position, by Theorem 13. This response is also possible from $(0,1,z;2)$, which concludes this case. 

For a candidate $\mcP$ position of the form $(1,2,z;2)$, note that $BB-3\subset AB-1$, and so both $(0,2,z;2)$ and $(1,2,z-2;4)$ revert to the $\mcP$ position $(0,2,z-2;r)$, $r=2,4$ respectively. A move to $(1,1,z;2)$ reverts to the $\mcP$ position $(1,1,z-1;2)$, because $z-1\in B-2$. A move to $(0,1,z;4)$ reverses to $(0,1,z-3;6)$, which is a $\mcP$ position (by Theorem~\ref{thm:alg2}). The move to $(1,2,z-1;2)$, reverses to $(1,1,z-1;2)$, which is a $\mcP$ position because $z-1\in B-2$. 

For the position $(1,3,z; 2)$, with $z\in AB-2$, if two stones are removed from the third pile, the position reverses to one of the same form. Similarly, from $(1,3,z-1; 2)$, it suffices to study the ``5" letter case, and thus $z-1\in BB$; there is a response to $(1,1,z-1; 4)\in \mcP$. Next, consider $(1,2,z; 2)$, with $z\in AB-2$; then respond to $(0,1,z; 4)\in \mcP$. Consider $(1,1,z; 4)$, with $z\in AB-2$; then respond to $(0,1,z; 2)\in \mcP$. The options $(0,1,z; 6)$ and $(0,3,z; 2)$, with $z\in AB-2$, are both $\mcN$ positions, by Theorem~\ref{thm:alg2}. 

For the position $(2,2,z; r)$, with $z\in B-2$ and $r=2,4$, playing on the third pile is reversible to a position of the same type. %it suffices to find a response to the option $(2,2,z-1; 2)$ when the current letter is a "3". Then $z-1\in AB-1$, and so $(0,2,z-1; 4)\in \mcP$ suffices. 
Playing on the first pile,  $(1,2,z; 2)$ reverses to $(1,1,z; 2)$ and playing to $(0,2,z; 4)$, gives an $\mcN$ position, by Theorem~\ref{thm:alg2}.

For the position $(2,3,z; 2)$, with $z\in AB-1$, playing on the third pile, it suffices to find a winning response to the option $(2,3,z-1; 2)$ when the current letter is a ``5,'' and therefore with $z-1\in BB-3\subset AB-2$. The option $(1,3,z-1; 2)\in \mcP$  suffices (so the letter ``5'' is not important). The same response obviously works for the option $(1,3,z; 2)$. The remaining options to check are $(0,3,z; 4)$ (which is an $\mcN$ position by Theorem~\ref{thm:alg2}), $(2,2,z; 2)$, and $(1,2,z; 4)$. These options have responses to $(0,2,z; r)\in \mcP$, for $r=2,4$. 
\end{proof}

\begin{qn} Are there any other two-pile \textsc{Fibonacci nim} positions $(n_1,n_2;\infty)$ (besides $(3,4;\infty)$) with no complementary value? \end{qn}

\section{An easier variant: global \textsc{power-of-two nim}} \label{sec:pow2nim}

\textsc{power-of-two nim} is a simpler variant of \textsc{Fibonacci nim}. In the classical (one-pile) formulation, the rules are the same as in \textsc{Fibonacci nim}, except that if the previous player removed $m$ stones, then the next player may only remove at most $m$ stones. Thus, the move dynamic can only stay the same or decrease on each move.

The winning strategy is closely related to that of \textsc{Fibonacci nim}, but it relies on the binary representation of $n$ rather than the Zeckendorf representation. A winning strategy is to remove the smallest bit from the binary representation of the pile size on each move, and a position is a $\mcP$ position if and only if the smallest bit is larger than the move dynamic.

We represent the multi-pile global \textsc{power-of-two nim} game using the same notation as we do the multi-pile global \textsc{Fibonacci nim} game. It turns out that we can describe the $\mcP$ positions of multi-pile \textsc{power-of-two nim} completely. Let $a\oplus b$ denote the nim sum of $a$ and $b$, and let $\sb(n)$ denote the smallest power of 2 in the binary expansion of $n$, i.e.\ if $n=2^{a_1}+\cdots+2^{a_k}$ where the $a_i$'s are distinct powers of 2 with $a_1<\cdots<a_k$, then $\sb(n)=2^{a_1}$. (If $n=0$, then we define $\sb(n)=\infty$.) Then we have the following:

\begin{thm} \label{thm:powtwosoln} The \textsc{power-of-two nim} game $(n_1,\ldots,n_k;r)\in\mcP$ if and only if $\sb(n_1\oplus\cdots\oplus n_k)>r$. \end{thm}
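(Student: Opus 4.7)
The approach is to prove the biconditional by induction on the total pile size $n_1 + \cdots + n_k$, using the recursive $\mcN$/$\mcP$ characterization of Proposition~\ref{prop:partition}. The base case is immediate: when all piles are empty there are no moves, so the position is in $\mcP$, and $\sb(0) = \infty > r$ matches.

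Everything hinges on a single lemma about binary arithmetic, which I will state and prove first:
\[
n \ge a \ge 1 \ \Longrightarrow \ \sb\bigl(n \oplus (n-a)\bigr) = \sb(a).
\]
The proof is short: every bit of $a$ strictly below the position of $\sb(a)$ is zero, so the bits of $n-a$ and $n$ agree strictly below that position; and a case split on whether $n$ already carries the bit $\sb(a)$ (no borrow, bit flips from $1$ to $0$) or not (borrow propagates upward, bit flips from $0$ to $1$) shows this bit always flips. This is the only genuine calculation in the proof, and I expect it to be the main, though minor, obstacle.

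Given the lemma, the direction $\Leftarrow$ runs as follows. Write $s = n_1 \oplus \cdots \oplus n_k$ and assume $\sb(s) > r$. A legal move removes $a$ stones, with $1 \le a \le r$, from some pile $n_i$, producing the new nim-sum $s' = s \oplus n_i \oplus (n_i - a)$ and new bound $r' = a$. Since $\sb(s) > r \ge a \ge \sb(a)$, the factor $n_i \oplus (n_i - a)$ has its lowest bit exactly at position $\sb(a)$ with zeros below, while $s$ has zeros at and below that position. Hence $\sb(s') = \sb(a) \le a = r'$, and the inductive hypothesis places the resulting position in $\mcN$. So $(n_1,\ldots,n_k;r) \in \mcP$.

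For the reverse direction, suppose $\sb(s) \le r$. Then $s \ne 0$, so setting $\ell = \sb(s)$, at least one pile $n_i$ has the $\ell$ bit set, in particular $n_i \ge \ell$. The winning move is to remove exactly $\ell$ stones from $n_i$: since $\ell$ is a single bit set in $n_i$, we have $n_i \oplus (n_i - \ell) = \ell$, so the new nim-sum is $s \oplus \ell$, and by choice of $\ell$ this satisfies $\sb(s \oplus \ell) > \ell = r'$. By the inductive hypothesis the resulting position lies in $\mcP$, so $(n_1,\ldots,n_k;r) \in \mcN$, completing the induction.
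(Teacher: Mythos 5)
Your proof is correct and follows essentially the same route as the paper: both directions hinge on the observation that $\sb\bigl(n\oplus(n-a)\bigr)=\sb(a)$ (which the paper uses implicitly in the line $\sb(n_1\oplus\cdots\oplus n_i'\oplus\cdots\oplus n_k)=\sb(n_i-n_i')$, and which you usefully isolate and prove), and the winning move in both cases is to remove exactly $\sb(n_1\oplus\cdots\oplus n_k)$ stones from a pile carrying that bit. The only difference is presentational: you make the induction and the binary lemma explicit, whereas the paper phrases the strategy as ``partial progress toward a winning \textsc{nim} move.''
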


\begin{cor} \label{cor:powtwocor} The \textsc{power-of-two nim} game $(n_1,\ldots,n_k;\infty)\in\mcP$ if and only if the \textsc{nim} game $(n_1,\ldots,n_k)\in\mcP$. \end{cor}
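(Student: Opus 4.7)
The plan is to combine Theorem~\ref{thm:powtwosoln} (with the observation that the $r=\infty$ case is recovered by analyzing all possible first moves) with Bouton's classical theorem that the ordinary nim position $(n_1,\ldots,n_k)\in\mcP$ if and only if $v := n_1\oplus\cdots\oplus n_k=0$. So I need to show that $(n_1,\ldots,n_k;\infty)\in\mcP$ if and only if $v=0$.

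First I would establish the ``$\Leftarrow$'' direction. Suppose $v=0$. A first move removes some $s\ge 1$ stones from a pile $n_i$, yielding $n_i':=n_i-s$ and new xor $v' = n_i\oplus n_i'$ (the other piles contribute $0$ since $v=0$). The key observation is the divisibility inequality
\[
\sb(v') \le s.
\]
Indeed, if $\sb(v')=2^j$, then $n_i$ and $n_i'$ agree on the lowest $j$ bits, so $s=n_i-n_i'$ is a multiple of $2^j$, which forces $2^j\le s$. By Theorem~\ref{thm:powtwosoln}, the resulting position $(n_1,\ldots,n_i',\ldots,n_k;s)\in\mcN$, since $\sb(v')\le s$ fails the $\sb(v')>s$ criterion. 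As every first move leads to an $\mcN$ position, $(n_1,\ldots,n_k;\infty)\in\mcP$.

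For the ``$\Rightarrow$'' direction, assume $v\ne 0$. By the standard nim strategy, there is a pile $i$ and an $n_i'<n_i$ such that replacing $n_i$ by $n_i'$ gives new xor equal to $0$. Let $s:=n_i-n_i'$. After this first move, the resulting position $(n_1,\ldots,n_i',\ldots,n_k;s)$ has $\sb(0)=\infty>s$, so by Theorem~\ref{thm:powtwosoln} it is in $\mcP$. Hence $(n_1,\ldots,n_k;\infty)\in\mcN$, completing the equivalence.

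The only nontrivial step is the divisibility inequality $\sb(n_i\oplus(n_i-s))\le s$, which is the main obstacle but is a one-line parity observation once framed correctly; the rest is a direct application of Theorem~\ref{thm:powtwosoln} together with Bouton's theorem.
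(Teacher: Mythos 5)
Your proof is correct and follows essentially the route the paper intends: the corollary is meant to fall out of Theorem~\ref{thm:powtwosoln} (extended to $r=\infty$ via the convention $\sb(0)=\infty$) together with Bouton's theorem, and your explicit first-move analysis just unpacks that. Your ``key'' divisibility inequality is the same observation, namely $\sb\bigl(n_i\oplus(n_i-s)\bigr)=\sb(s)\le s$, that already appears in the paper's proof of the theorem itself.
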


% The proof relies on the following Lemma:

% \simon{Remove this Lemma once I'm sure we no longer need it.} \begin{lem} \label{lem:powtwolem} Suppose that $m$ and $n$ are nonnegative integers with $m<n$. Then if $2^a$ is the smallest power of two at which the binary expressions of $m$ and $n$ differ (i.e.\ exactly one of $m$ and $n$ has a 1 in the $a^\text{th}$ spot in their binary representations), then $2^a\le n-m$. \end{lem}

% \begin{proof} Use the grade-school subtraction algorithm. \end{proof}

\begin{proof}[Proof of Theorem~\ref{thm:powtwosoln}] The idea is to mimic good play in \textsc{nim}, playing a move that makes partial progress toward a winning \textsc{nim} move. To this end, we show that, given a position that we claim to be an $\mcN$ position, there is a move to a position that we claim to be a $\mcP$ position, whereas given a claimed $\mcP$ position, all moves are to claimed $\mcN$ positions. By Proposition~\ref{prop:partition}, this shows that the $\mcP$ positions are exactly as we claim them to be.

First, suppose that $(n_1,\ldots,n_k;r)$ is a \textsc{power-of-two nim} position with $\sb(n_1\oplus\cdots\oplus n_k)\le r$. Then there is some move $n_i\to n_i'$ that is a winning move in \textsc{nim}. Let $2^a=\sb(n_1\oplus\cdots\oplus n_k)$. Then $2^a\le r$, so removing $2^a$ stones from pile $i$ is a legal move, to $(n_1,\ldots,n_i',\ldots,n_k;2^a)$. But now $\sb(n_1\oplus\cdots\oplus n_i'\oplus\cdots\oplus n_k)\ge 2^{a+1}$, so this position is a claimed $\mcP$ position.

On the other hand, suppose that $\sb(n_1\oplus\cdots\oplus n_k)>r$, and consider the move $n_i\to n_i'$, where $n_i-n_i'\le r$. Then $\sb(n_1\oplus\cdots n_i'\oplus\cdots\oplus n_i)=\sb(n_i-n_i')$, so the position $(n_1,\ldots,n_i',\ldots,n_k;n_i-n_i')$ is a claimed $\mcN$ position. This completes the proof of Theorem~\ref{thm:powtwosoln}. \end{proof}

\bibliography{globalfibnim}
\bibliographystyle{halpha}

\end{document}